\newcommand{\bit}{\begin{itemize}}
\newcommand{\eit}{\end{itemize}}
\newcommand{\be}{\begin{equation}}
\newcommand{\ee}{\end{equation}}
\newcommand{\bea}{\begin{eqnarray}}
\newcommand{\eea}{\end{eqnarray}}
\newcommand{\argmax}{\mathop{\mathrm{arg\,max}}}
\newcommand{\myvec}[1]{\mathbf{#1}}
\newcommand\alg[1]{\texttt{#1}}
\newcommand\myeq{\mathrel{\stackrel{\makebox[0pt]{\mbox{\normalfont\tiny def}}}{=}}}
\newcommand{\ie}{i.e.}
\newcommand{\eg}{e.g.}
\newcommand*\Bell{\ensuremath{\boldsymbol\ell}}
\newcommand{\hv}{$I^{-}_{H}$}
\newcommand{\epsind}{$I^{1}_{\epsilon+}$}
\newcommand{\IUnaryAddInd}{$I^{1}_{\epsilon+}(\mathcal{Y}^t_*)$}
\newtheorem{remark}{Remark}
\newtheorem{thm}{Theorem}
\newtheorem{lem}{Lemma}
\newtheorem{definition}{Definition}
\definecolor{NavyBlue}{RGB}{35,35,142}
\definecolor{RawSienna}{RGB}{199,97,20}
\def\LT@makecaption#1#2#3{%
 \LT@mcol\LT@cols c{\hbox to\z@{\hss\parbox[c]\LTcapwidth{%
     \footnotesize
     \setlength{\parindent}{1.5pc}
   \hbox to\hsize{\hfil #1{\normalfont \scshape #2 }  \hfil}
    \setbox\@tempboxa\hbox{{\normalfont\itshape #3}}
   \ifdim\wd\@tempboxa>\hsize
    {\hspace{0pt}\normalfont\itshape #3}\par 
   \else
     \hbox to\hsize{\hfil\box\@tempboxa\hfil}%
   \fi
    \vskip\belowcaptionskip
  }%
 \hss
 } } }
\title{ Multi-Objective Simultaneous Optimistic Optimization
}
\author{Abdullah Al-Dujaili\thanks{School of Computer Engineering, Nanyang Technological University, Singapore 639798 (aldujail001@e.ntu.edu.sg).  Support for this author was provided by ATMRI:2014-R8, Singapore.
}\and S. Suresh\thanks{School of Computer Engineering, Nanyang Technological University, Singapore 639798 (ssundaram@ntu.edu.sg).  Support for this author was provided by ATMRI:2014-R8, Singapore.
}}
\date{\today}
\begin{document}
\maketitle

\begin{abstract}
Optimistic methods have been applied with success to single-objective optimization. Here, we attempt to bridge the gap between optimistic methods and multi-objective optimization. In particular, this paper is
concerned with solving black-box multi-objective  problems given a finite number of function evaluations and proposes an optimistic approach, which we refer to as the Multi-Objective Simultaneous Optimistic Optimization (\alg{MO-SOO}).
Popularized by multi-armed bandits, \alg{MO-SOO} follows the optimism in the face of uncertainty principle to recognize Pareto optimal solutions, by combining several multi-armed bandits in a hierarchical structure over the feasible decision space of a multi-objective problem.
Based on three assumptions about the objective functions smoothness and hierarchical partitioning, the algorithm finite-time and asymptotic convergence behaviors are analyzed. The finite-time analysis establishes an upper bound on the Pareto-compliant unary  additive epsilon indicator characterized by the objectives smoothness as well as the structure of the Pareto front with respect to its extrema. On the other hand, the asymptotic analysis indicates the consistency property of \alg{MO-SOO}. Moreover, we validate the theoretical provable performance of the algorithm on a set of synthetic problems.
Finally, three-hundred bi-objective benchmark problems from the literature are used to substantiate the performance of the optimistic approach and compare it with three state-of-the-art stochastic algorithms, namely  \alg{MOEA/D}, \alg{MO-CMA-ES}, and \alg{SMS-EMOA} in terms of two Pareto-compliant quality indicators. Besides sound theoretical properties, \alg{MO-SOO} shows a performance on a par with the top performing stochastic algorithm, viz. \alg{SMS-EMOA}.
\end{abstract}

\pagestyle{myheadings}
\thispagestyle{plain}
\markboth{\sc{Multi-Objective Simultaneous Optimistic Optimization}}{\sc{Abdullah Al-Dujaili, S. Suresh}}

\section{Introduction}
\label{sec:intro}

Many real-world  application and decision problems involve optimizing two or more objectives at the same time~(see, \eg,~\cite{depraetere2010iterative,aguilar2010surgical}). These problems are often referred to as Multi-Objective Optimization (\textsc{MOO}). In the general case, MOO problems are hard because the objective functions are often conflictual, and it is difficult to design strategies that are optimal for all objectives simultaneously. Furthermore, with conflicting objectives, there  does not exist a single optimal solution but a set of incomparable optimal solutions: each is inferior to the other in some objectives and superior in other objectives. This induces a partial order on the set of feasible solutions to an MOO problem. The set of optimal feasible solutions according to this partial order is referred to as the \emph{Pareto optimal set} and its corresponding image in the objective space is commonly named as the \emph{Pareto front} of the problem.  The task of MOO algorithms therefore becomes finding the Pareto front or producing a good approximation of it (referred to as an \emph{approximation set} of the problem).

Generally, certain assumptions are made about the objective functions being optimized (e.g., its continuity or differentiability). However, these assumptions are not necessarily satisfied by real-world problems. Sometimes, the only information available about the objective functions are their point-wise evaluations: computing their derivatives or other measures are either expensive, unreliable, or even impossible.  Such problems are called \textit{black-box} multi-objective optimization problems and appear very often in real-world settings~\cite{hoffmann2006derivative}. In this paper, we study the problem of black-box MOO given a finite number of objective functions evaluations (often referred to as the evaluation budget).

Conventionally, solving a multi-objective optimization problem follows one of two principles, namely \emph{preference-based} and \emph{ideal} principles \cite{deb2001multi}. Following the preference-based principle, the MOO problem is transformed into a single-objective optimization problem (through an aggregation/scalarization function that exploits a priori information), which then can be solved using one of many available single-objective optimizers~\cite{26_p,miettinen1999nonlinear,mannor2004geometric}. While preference-based algorithms converge to a single solution in each run, ideal-based algorithms search for a set of solutions at once. One example in this approach is evolutionary multi-objective algorithms~\cite{deb2002fast,spea2} in which a population of solutions evolves, following a crude analogy with Darwinian evolution, towards better solutions. Recently, there has been a growing interest of formulating multi-objective problems within the framework of reinforcement learning (see, for instance,~\cite{gabor1998multi,natarajan2005dynamic,barrett2008learning,liao2010multi,roijers2013survey}).

Among the several lessons learned from the aforementioned MOO solvers over the past decades is that, in order to generate a dense and good approximation set, one must maintain the set diversity. Furthermore, one must not discard inferior solutions too easily, as some of them may pave the way towards rarely-visited regions of the Pareto front \cite{loshchilov:phd}. In other words, the exploration-vs.-exploitation trade-off in search for the Pareto optimal set should be thought carefully about, at the \emph{algorithmic design level}. With this regard, in this paper, we are motivated to address the problem of multi-objective optimization within the framework of \emph{optimistic sequential decision-making methods}. \ie, methods that implement the \textit{optimism in the face of uncertainty} principle. Such principle finds its foundations in the machine learning field addressing the exploration-vs.-exploitation dilemma, known as the \textit{multi-armed bandit problem} (introduced independently by~\cite{thompson1933likelihood} and~\cite{robbins1952some}).

Within the context of single-objective optimization, optimistic sequential decision-making approaches formulate the complex problem of global optimization over the decision space~$\mathcal{X}$ as a hierarchy of simple bandit problems over subspaces of~$\mathcal{X}$ and look for  the optimal solution through~$\mathcal{X}$-partitioning search trees: each leaf corresponds to a subspace of~$\mathcal{X}$, with the root corresponding to~$\mathcal{X}$ and nodes at depth~$h\in\mathbb{N}_0$ represent a partition of~$\mathcal{X}$ at scale~$h$. At step $t$, such algorithms optimistically expand a leaf node (\ie, partition the corresponding subspace) that may contain the optimum. In other words, optimistic algorithms consider partitions of the search space at multiple scales in search for the optimal solution~\cite{munos2011optimistic,bubeck2011x,wang2014bayesian}. Recently, the optimistic optimization algorithm, {Naive Multi-scale Search Optimization}~\cite{ash-nmso-15}, has been shown to be a viable alternative to solve black-box optimization problems---see the results of the Black-Box Optimization Competition (\textsc{BBComp}) within the Genetic and Evolutionary
Computation Conference (GECCO'2015)~\cite{bbcomp15}.

On the other hand, two observations can be made about optimistic methods within the context of multi-objective optimization.  First, there has been very little/limited yet slowly growing research reported on optimistic methods for multi-objective optimization. For instance, the focus of multi-objective multi-armed bandit problems has been distinctly on a discrete set of arms~\cite{drugan2013designing}, or solving a subproblem (\eg, selecting a genetic operator in evolutionary multi-objective algorithms \cite{LiFKZ14}). Second, the algorithmic development and validation have been dominantly empirical~(see, for instance, \cite{van2014multi,wang:hal-00758379,vamplew2011empirical}).

Being one of the simplest single-objective optimistic methods with a theoretically provable performance, this paper is inspired by  the Simultaneous Optimistic Optimization (\alg{SOO})~\cite{munos2011optimistic} to develop an optimistic algorithm for multi-objective problems. We refer to this algorithm as the Multi-Objective Simultaneous Optimistic Optimization (\alg{MO-SOO}). In order to find a good approximation set of the Pareto front, \alg{MO-SOO} employs---similar to optimistic methods ---hierarchical bandits over the decision space. Represented by a divide-and-conquer tree structure, the hierarchical bandits are realized by partitioning the decision space over multiple scales. At each step, \alg{MO-SOO} expands leaf nodes (partitions the corresponding subspaces) that may optimistically contain \emph{Pareto optimal} solutions. Based on three assumptions about the function smoothness and partitioning strategy, we analyze the finite-time and asymptotic convergence behaviors of \alg{MO-SOO}. The finite-time study is based on quantifying how much exploration is required to achieve near-optimal objective-wise solutions. As a result, we are able to upper bound the loss of the obtained solutions with respect to the objective-wise optimal solutions. Using this objective-wise loss bound, an upper bound on the Pareto-compliant unary additive epsilon indicator~\cite{zitzler2003performance} is established as a function of the number of iterations. The bound is characterized by the objectives smoothness as well as the structure of the Pareto front with respect to its extrema. First time in the literature, a deterministic upper bound on a Pareto-compliant indicator is presented for a solver of continuous MOO problems. However, the presented bound holds down to a problem-dependent constant. Furthermore, the systematic sampling nature of the decision space in \alg{MO-SOO} helps in analyzing the asymptotic behavior, which indicates its consistency, viz. optimality in the limit. Using symbolic maths, the theoretical provable performance of the algorithm has been validated on a synthetic problem.

Complementing the theoretical results, an empirical validation study has been conducted using 300 bi-objective benchmark problems from the literature~\cite{brockhoff:hal-01146741}. The test suite considers problems with various objective functions categories reflecting real-world scenarios such as separability and multi-modality. It can also be used to validate the algorithms scalability with the decision space dimension.

Furthermore, \alg{MO-SOO} has been compared with 3 state-of-the-art stochastic algorithms, namely \alg{MOEA/D}~\cite{zhang2007moea}, \alg{MO-CMA-ES}~\cite{voss2010improved} , and \alg{SMS-EMOA}~\cite{beume2007sms} in terms of two Pareto-compliant quality indicators~\cite{knowles_tutorial_indicator}: the hypervolume (\hv) and the unary additive $\epsilon$-indicator (\epsind). The results are presented in form of data profiles, which adequately capture the convergence behavior of the algorithms over the number of function evaluations used. \alg{MO-SOO} shows a comparable performance with the top performing stochastic algorithm, viz. \alg{SMS-EMOA}.

The rest of the paper is organized as follows. Section~\ref{sec:formal_bg} discusses briefly related formal background. Section~\ref{sec:mosoo} presents the \alg{MO-SOO} algorithm and provides a worked example. Then, the algorithm's finite-time and asymptotic convergence is studied in Section~\ref{sec:conv} with supporting illustrations.  Numerical assessment of \alg{MO-SOO} is discussed in Section~\ref{sec:assessment}. Section~\ref{sec:conclusion} concludes the paper.

\section{Formal Background}
\label{sec:formal_bg}
This section introduces the main notations and terminology used in the rest of the paper. Furthermore, it provides a brief description of the multi-objective optimization problem and the optimistic approach in optimization.
\subsection{Multi-Objective Optimization}
\label{sec:bg:bbmo}

\textit{Without loss of generality}, the multi-objective minimization problem with $n$ decision variables and $m$ objectives, has the form:

\begin{equation}
	\begin{aligned}
		& {\text{minimize}}
		& & \myvec{y} = \myvec{f}(\myvec{x}) = (f_1(\myvec{x}), \ldots, f_m(\myvec{x}))\\
		& \text{where}
		& & \myvec{x} =  (x_1, \ldots, x_n)\in \mathcal{X} \\
		& & & \myvec{y} =  (y_1, \ldots, y_m)\in \mathcal{Y}
	\end{aligned}
	\label{eq:problem_def}
\end{equation}
and where $\myvec{x}$ is called the \emph{decision vector (solution)}, $\myvec{y}$ is called the \emph{objective vector},\footnote{For brevity, we sometimes omit the word \emph{objective} when referring to an objective vector. 
} $\mathcal{X}$ 
is the \emph{feasible decision space}
, and $\mathcal{Y}={\text{\huge $\times$}}_{1 \leq j\leq m} \mathcal{Y}_j$ is the corresponding \emph{objective space}, where $\mathcal{Y}_j$ is the $j$th-objective space and we write the corresponding image in the objective space for any region $\hat{\mathcal{X}}\subseteq \mathcal{X}$ as $\myvec{f}(\hat{\mathcal{X}})\subseteq\mathcal{Y}$. It is assumed that: the derivatives of the functions involved are neither symbolically nor numerically available; nevertheless, $\myvec{f}$ can be evaluated point-wise; and that evaluating it is typically expensive, requiring some computational resources (\eg, time, power, money). More specifically, the task is to best approximately solve (in a sense to be defined later) Eq.~\eqref{eq:problem_def} using a computational budget of $v$ function evaluations.

A vector $\myvec{y}^1$ is more preferable than another vector $\myvec{y}^2$, if $\myvec{y}^1$ is at least as good as $\myvec{y}^2$ in all objectives \emph{and} better with respect to at least one objective. $\myvec{y}^1$ is then said to be \emph{dominating} $\myvec{y}^2$. This notion of dominance is commonly known as \textit{Pareto dominance}~\cite{pareto-book}, which leads to a \textit{partial order} on the objective space, where we can define a Pareto optimal vector to be one that is non-dominated by any other vector in $\mathcal{Y}$.  Nevertheless,  $\myvec{y}^1$ and $\myvec{y}^2$ may be incomparable to each other, because each is inferior to the other in some objectives and superior in other objectives. Hence, there can be several Pareto optimal vectors. The following definitions put these concepts formally, in line with \cite{loshchilov:phd,zitzler2003performance}.

\begin{definition}[Pareto dominance] The vector $\myvec{y}^1$ dominates the vector $\myvec{y}^2$, that is to say, $\myvec{y}^1 \prec\myvec{y}^2$$\iff$ $y^1_j\leq y^2_j$ for all $j\in \{1,\ldots,m\}$ and $y^1_k < y^2_k$ for at least one $k\in\{1,\ldots,m\}$.
	\label{def:parteo_dominance}
\end{definition}

\begin{definition}[Strict Pareto dominance] The vector $\myvec{y}^1$ strictly dominates the vector~$\myvec{y}^2$ if $\myvec{y}^1$ is better than $\myvec{y}^2$ in all the objectives, that is to say, $\myvec{y}^1 \prec\prec\myvec{y}^2\iff$ $y^1_j< y^2_j$ for all $j\in \{1,\ldots,m\}$.
	\label{def:strict_parteo_dominance}
\end{definition}

\begin{definition}[Weak Pareto dominance] The vector $\myvec{y}^1$ weakly dominates the vector~$\myvec{y}^2$ if $\myvec{y}^1$ is not worse than $\myvec{y}^2$ in all the objectives, that is to say, $\myvec{y}^1 \preceq\myvec{y}^2\iff$ $y^1_j\leq y^2_j$ for all $j\in \{1,\ldots,m\}$.
	\label{def:weak_parteo_dominance}
\end{definition}

\begin{definition}[Pareto optimality of vectors]
	\label{def:paretoptimal}
	Let $\hat{\myvec{y}}\in \mathcal{Y}$ be a vector. $\hat{\myvec{y}}$ is Pareto optimal $\iff$ $\nexists \myvec{y} \in \mathcal{Y}$ such that  $\myvec{y}\prec \hat{\myvec{y}}$. The set
	of all Pareto optimal vectors is referred to as the Pareto front and denoted as $\mathcal{Y}^*$. The corresponding decision vectors (solutions) are referred to as the Pareto optimal solutions or the Pareto set and denoted by $\mathcal{X}^*$.
\end{definition}
In other words, the solution to the \textsc{MOO} problem \eqref{eq:problem_def} is its Pareto optimal solutions (Pareto front in the objective space). Practically, \textsc{MOO} solvers aim to identify a set of objective vectors that represent the Pareto front (or a good approximation of it). We refer to this set as the approximation set. 
\begin{definition}[Approximation set]
	Let $A \subseteq \mathcal{Y}$ be
	a set of objective vectors. $A$ is called an approximation
	set if any element of $A$ does not dominate or is
	not equal to any other objective vector in $A$. The set
	of all approximation sets is denoted as $\Omega$. Note that $\mathcal{Y}^*\in \Omega$.
	\label{def:approx_set}
\end{definition}
Furthermore, denote the \emph{ideal point (vector)} (not necessarily reachable) by $\myvec{y}^*\myeq$ $(\min_{\myvec{y}\in\mathcal{Y}^*} y_1,$
$\ldots,\min_{\myvec{y}\in\mathcal{Y}^*} y_m)$. Likewise, let us denote the (or one of the) global optimizer(s) of the $j$th objective function by $\myvec{x}^*_j$, \ie, $y^*_j=f_j(\myvec{x}^*_j)$. Note that $\myvec{x}^*_j\in \mathcal{X}^*$. On the other hand, we define the \emph{nadir point} of a region in the objective space~$\hat{\mathcal{Y}}\subseteq \mathcal{Y}$ as $\myvec{y}^{nadir}(\hat{\mathcal{Y}})\myeq(\max_{\myvec{y}\in\hat{\mathcal{Y}}} y_1,$
$\ldots,\max_{\myvec{y}\in\hat{\mathcal{Y}}} y_m)$.
\subsection{Optimistic Optimization}
\label{sec:bg:oo}
The \emph{optimism in the face of uncertainty} principle recommends following the optimal strategy with respect to the most favorable scenario among all possible scenarios that are compatible with the obtained observations about the problem at hand~\cite{MunosFTML2014}. This principle has been applied primarily within the framework of multi-armed bandit problem \cite{auer2002finite} and later was extended to many (possibly infinite) arms under a probabilistic or structural (smoothness) assumption about the arm rewards. An algorithmic instance was the Monte Carlo tree search, which witnessed an experimental success in computer GO~\cite{wang2007modifications}.

With this regard, global continuous optimization can be  modeled as a structured bandit problem where the objective value is a function of some arm parameters \cite{auer2003using,rusmevichientong2010linearly}. Based on the observations and the smoothness assumption, an optimistic strategy would compute a bound on the objective (reward) value at each solution (arm) $\myvec{x}\in \mathcal{X}$  and choose the arm with the best bound. Examples of global continuous optimization algorithms with a closely related approach are Lipschitzian optimization techniques \cite{pinter1995}.  However, this approach poses two problems: i) the computational complexity of computing the bounds over $\mathcal{X}$ at each step; ii) the restriction that the smoothness assumption puts on the objective functions that can be optimized. While the second issue can be addressed with weak, yet effective assumptions on the function smoothness, \eg, local (rather than global) smoothness; the first issue can be alleviated by transforming the problem from a many-arm bandit to a hierarchy of multi-armed bandits (often referred to as \textit{hierarchical bandits} \cite{kocsis2006bandit}). Hence, an optimistic optimization algorithm can be regarded as a tree-search divide-and-conquer algorithm that iteratively constructs finer and finer partitions of the search space $\mathcal{X}$ at multiple scales $h\in\mathbb{N}_0$. Given a scale~$h\geq 0$ and a partition factor $K \geq 2$, $\mathcal{X}$ can be partitioned into a set of $K^h$ cells/hyperrectangles/subspaces $\mathcal{X}_{h,i}$ where $0 \leq i \leq K^h-1$ such that $\cup_{i\in\{0,\ldots,K^h-1\}} \mathcal{X}_{h,i} = \mathcal{X}$. These cells are represented by nodes of a $K$-ary tree $\mathcal{T}$ (as shown in Figure~\ref{fig:hierarchical_partition}), where a node $(h,i)$ represents the cell $\mathcal{X}_{h,i}$ (the root node $(0,0)$ represents the entire search space $\mathcal{X}_{0,0}=\mathcal{X}$). A parent node possesses $K$ child nodes $\{(h+1,i_k)\}_{1\leq k \leq K}$, whose cells $\{\mathcal{X}_{h+1,i_k}\}_{1\leq k \leq K}$ form a partition of the parent's cell $\mathcal{X}_{h,i}$. The set of leaves in $\mathcal{T}$ is denoted as $\mathcal{L} \subseteq \mathcal{T}$. Attributes of a node $(h,i)$ are indexed by its $h$ and $i$. Accordingly, each node is associated with a representative state $\myvec{x}_{h,i} \in \mathcal{X}_{h,i}$ at which the objective function may be evaluated as a part of the sequential framework and out of the $v$-evaluation budget. Based on this evaluation, an optimistic bound of the function over $\mathcal{X}_{h,i}$, denoted by $b_{h,i}$ in analogy to the $B$-value in multi-armed bandits, is defined. The optimistic bound $b_{h,i}$ governs when $(h,i)$  gets expanded. Clearly, only evaluated leaf nodes are expandable and we denote them by $\mathcal{E}\subseteq \mathcal{L}$. The process of evaluating the function at $\myvec{x}_{h,i}$ is referred to as \textit{evaluating the node $(h,i)$}; and the process of splitting a cell $\mathcal{X}_{h,i}$, whose node $(h,i) \in \mathcal{E}$, into $K$ subcells (resp., $K$ child nodes) as \textit{expanding the node~$(h,i)$}.

\begin{figure}[tb]
	\begin{center}
		\renewcommand{\arraystretch}{1.2}
		\includegraphics[scale=1.]{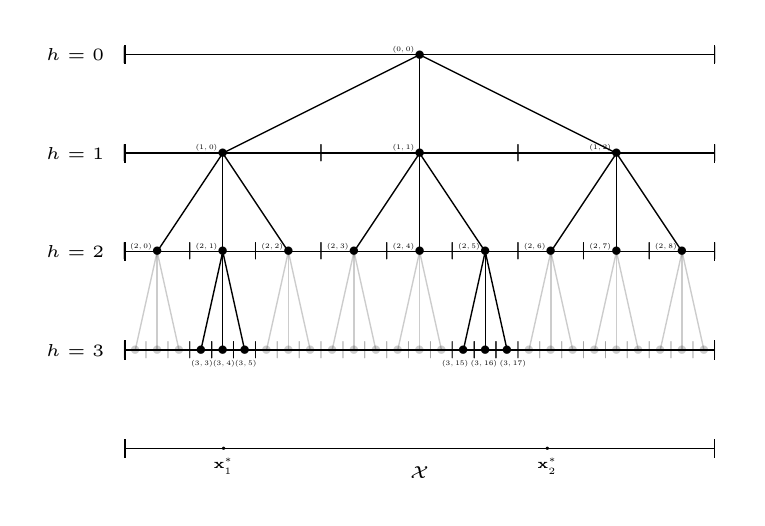}
	\end{center}
	\caption{Hierarchical partitioning of the decision space $\mathcal{X}$ with a partition factor of $K=3$ at iteration~$t$ represented by a $K$-ary tree. Consider a multi-objective problem where $m=2$ and the global optimizers of the first and second objective functions ($\myvec{x}^*_1$ and $\myvec{x}^*_2$, respectively) are shown. Thus, the nodes $\{(3,4),(2,1),(1,0),(0,0)\}$ and $\{(2,6),(1,2),(0,0)\}$ are $1$- and $2$-optimal nodes, respectively. Furthermore, $h^*_{1,t}=3$ and $h^*_{2,t}=2$ (more in Section~\ref{sec:conv}).}
	\label{fig:hierarchical_partition}		
\end{figure}

Among the several single-objective optimistic optimization algorithms that have been proposed and validated in the literature \cite{munos2011optimistic,valkostochastic,preux2014bandits,bbcomp15}; the Simultaneous Optimistic Optimization (\alg{SOO}) is the simplest, which makes it easy to implement efficiently. Furthermore, it is a rank-preserving algorithm, with theoretically provable finite-time performance~\cite{munos2011optimistic}, hence we are inspired by \alg{SOO} to solve the multi-objective optimization problem~\eqref{eq:problem_def}, optimistically.

\subsection{Simultaneous Optimistic Optimization (\alg{SOO})}
\label{sec:bg:soo}

The optimistic method, \alg{SOO}, was originally introduced in \cite{munos2011optimistic} and
falls in the family of global single-objective optimizers. It assumes local smoothness around the function's global minimum(a), \ie, $f(x) - f^* \leq \ell(x,x^*)$ where~$\ell:\mathcal{X}\times \mathcal{X}\to \mathbb{R}^+$ is a semi-metric. With this assumption, an optimistic lower bound of the objective function values over the cells  of the search-space hierarchical partitioning can be defined, mathematically:
\begin{equation}
	b_{h,i} = f(\myvec{x}_{h,i}) - \sup_{\myvec{x}\in \mathcal{X}_{h,i}} \ell(\myvec{x},\myvec{x}_{h,i}), \forall (h,i)\in \mathcal{T}
	\label{eq:b-value}
\end{equation}
Consequently, \texttt{SOO} would expand simultaneously all the nodes $(h,i)$ of its tree $\mathcal{T}$ whose $b$-values (Eq.~\ref{eq:b-value}) would be the least with respect to a semi-metric $\ell$.
However, in practice, the knowledge of $\ell$ is not always present.  Instead, \alg{SOO} simulates the effect of Eq.~\eqref{eq:b-value} by iteratively expanding at most a leaf node per depth if such node has the least $f(\myvec{x}_{h,i})$ with respect to leaf nodes of the same or lower depths. In addition to that, the algorithm takes a function $h_{max}(t)$, as a parameter,  such that after $t$ node expansions only nodes at depth~$h\leq h_{max}(t)$ can be expanded.

As outlined in Algorithm~\ref{alg:soo}, \alg{SOO} grows a tree $\mathcal{T}$ over $\mathcal{X}$ by expanding at most one leaf node per depth in an iterative sweep across $\mathcal{T}$'s depths/levels. At depth $h\geq0$, a leaf $(h,i)$ is expanded if its function value $f(\myvec{x}_{h,i})$ is the(or one of the) lowest (with respect to minimization) among the leaves at depth $h$ as well as all the expanded nodes at depths $<h$ in the current sweep. Splitting a node is worked out by partitioning its subspace along one dimension of $\mathcal{X}$, which can be chosen among $\mathcal{X}$'s dimensions either in a random (any one dimension out of the $n$) or sequential (one dimension after the other in a fixed sequence) manner. In \alg{SOO}, $\mathcal{E}=\mathcal{L}$, \ie, leaf nodes are evaluated once they are created.

\begin{algorithm}[tbp]
	\SetKwInOut{Input}{Input}
	\SetKwInOut{Variables}{Variables}
	\SetKwInOut{Output}{Output}
	\SetKwInOut{Initialization}{Initialization}
	
	\Input{function to be minimized $f$, \\
		search space $\mathcal{X}$,\\
		partition factor $K$, \\
		evaluation budget $v$, \\
		maximal depth function $t\to h_{max}(t)$
	}
	\Initialization{$\mathcal{T}_1\gets\{(0,0)\}$\\ $t\gets 1$}
	\Output{approximation of $\min_{\myvec{x}\in \mathcal{X}}f(\myvec{x})$}
	\BlankLine
	
	\While{evaluation budget is not exhausted} {
		$v \gets \infty$\\
		
		\For{$h\leftarrow 0$ \KwTo min$(h_{max}(t)$, depth($\mathcal{T}_t$))} {
			Among all leaves $(h, j)\in\mathcal{L}_t$ of depth $h$, select $(h, i) \in \arg\min_{(h,j)\in\mathcal{L}_t}
			f(\myvec{x}_{h,j})$ \label{line:soo_pt} \\
			\If{$f(\myvec{x}_{h,i}) \leq v$ \label{line:soo_qt}}
			{	
				$t\gets t+1 $\\
				$v \gets f(\myvec{x}_{h,i})$\\
				Expand the node $(h,i)$: add to $\mathcal{T}_t$ and evaluate its $K$ children,
				$\mathcal{T}_{t} \gets \mathcal{T}_{t-1} \cup \{(h+1,i_k)\}_{1\leq k\leq K}$\\
			}
		}
	}
	\Return{$\min_{\myvec{x}_{h,i}:(h,i)\in \mathcal{T}_t} f(\myvec{x}_{h,i})$}
	\caption{\textbf{\alg{SOO}}}
	\label{alg:soo}
\end{algorithm}

\section{Optimistic Optimization for Multi-Objective Problems}
\label{sec:mosoo}

This section presents the Multi-Objective Simultaneous Optimistic Optimization (\alg{MO-SOO}) algorithm to solve multi-objective optimization problems. \alg{MO-SOO} partitions the search space over multiple scales to find a good approximation set of the Pareto front. First, we describe a template for optimistic methods to address multiple objectives instead of a single objective and then present the \alg{MO-SOO} algorithm along with a worked example.

\subsection{From Single- to Multi-Objective Optimization}
\label{sec:from-soo-to-moo}
The class of optimistic methods encodes the search for optimal solutions as a tree of bandits, where the B-value of each arm represents an optimistic bound on the values of the objective function values over tree's nodes. In an iterative manner: an optimistic method assesses a set of leaf nodes of its tree on the search space~$\mathcal{X}$ and selectively expands a set of them. In other words, optimistic algorithms differ only in their strategies of growing and using the tree further to provide a good approximation of the optimal solutions. Based on this view, a generic template of optimistic algorithms for optimization problems can be derived (shown in Algorithm~\ref{alg:template_oo}), where the set of leaf nodes to be assessed at iteration $t$ are denoted by $\mathcal{P}_t$. Likewise, the set of leaf nodes to be expanded at iteration $t$ are denoted by $\mathcal{Q}_t\subseteq \mathcal{P}_t$.	In essence, $\mathcal{P}_t$ represents the subset of nodes that can be expanded at iteration $t$, which may depend on its depth/level. On the other hand, $\mathcal{Q}_t$ are the potentially optimal nodes according to their representative states that are expanded at iteration $t$. These two sets are algorithm-dependent.
\begin{algorithm}[tbp]
	\SetKwInOut{Input}{Input}
	\SetKwInOut{Variables}{Variables}
	\SetKwInOut{Output}{Output}
	\SetKwInOut{Initialization}{Initialization}
	\Initialization{$\mathcal{T}_t=\{(0,0)\}$\\ $t\gets 0$}
	\Output{approximation of optimal solutions}
	\BlankLine
	\While{evaluation budget is not exhausted} {
		Assess the nodes $\in \mathcal{P}_t$\\
		Expand the nodes $\in \mathcal{Q}_t$ \& add their child nodes in $\mathcal{T}_t$\\
		$t\gets t +1$\\
	}
	\Return{ the best solutions found}
	\caption{Template for Optimistic Optimization}
	\label{alg:template_oo}
\end{algorithm}

With regard to \alg{SOO}, $\mathcal{P}_t$ is the set of leaf nodes at the depth considered at iteration $t$ (Algorithm~\ref{alg:soo}, line~\ref{line:soo_pt}), whereas $\mathcal{Q}_t$ is at most one node $\in \mathcal{P}_t$ that satisfies the conditions in Algorithm~\ref{alg:soo}, lines \ref{line:soo_pt}--\ref{line:soo_qt}. On this notion of sets, optimistic methods can be extended to multi-objective settings by defining the corresponding $\mathcal{P}$ and $\mathcal{Q}$.

In other words and with regards to problem \eqref{eq:problem_def}, at the $s${th} step, choosing a node (resp., its representative state~$\myvec{x}^s$) depends on the previous~$s-1$ chosen nodes (resp., their representative states and corresponding objective vectors~$\{(\myvec{x}^1,\myvec{f}(\myvec{x}^1)),\ldots,(\myvec{x}^{s-1}\\,\myvec{f}(\myvec{x}^{s-1}))\}$). Consequently, the algorithm constructs a sequence of $v$ points and returns its approximation set, denoted by $\mathcal{Y}^v_*\in \Omega$.

In accordance with the single-objective loss measure for optimistic methods,\footnote{The quality of the returned solution for single-objective settings is evaluated by the loss measure:
	$r(v)= \min_{\myvec{x}\in\{\myvec{x}^1,\ldots,\myvec{x}^{v}\}}f(\myvec{x}) -  \min_{\myvec{x}\in\mathcal{X}} f(\myvec{x})$}
we introduce the following vectorial loss measure for \textsc{MOO}:
\begin{equation}
\myvec{r}(v)= \myvec{y}^v_* -  \myvec{y}^*
\label{eq:loss}
\end{equation}
where $\myvec{y}^v_*\myeq(\min_{\myvec{y}\in\mathcal{Y}^v_*} y_1,\ldots,\min_{\myvec{y}\in\mathcal{Y}^v_*} y_m)$ is the empirical ideal point found so far.

\subsection{The \alg{MO-SOO} Algorithm}
Based on the generic template of optimistic optimization (Algorithm~\ref{alg:template_oo}), an \textsc{MOO} algorithmic instance
whose aim is to recognize Pareto optimal solutions can be realized. Taking inspiration from \alg{SOO}, we refer to it as the Multi-Objective Simultaneous Optimistic Optimization (\alg{MO-SOO}). \alg{MO-SOO} iteratively considers leaf nodes, one depth at a time, starting from the root. The sets  $\mathcal{P}$ and $\mathcal{Q}$ are defined as follows. Denote $\mathcal{T}$'s depth considered at iteration $t$ by $h_t$, we have:
\begin{itemize}
	\item $\mathcal{P}_t \myeq \{\text{leaf nodes at depth } h_t\}$.
	\item $\mathcal{Q}_t \myeq$ the subset of nodes $\in \mathcal{P}_t$ that are non-dominated with respect to $\mathcal{P}_t$ as well as all the expanded nodes in the previous $h_t$ iterations, based on their representative objective vectors. Finding this set is captured by the operator $\ND(\cdot)$, which is defined next.
\end{itemize}

\begin{definition}[The non-dominated operator $\ND(\cdot)$] Let $A\subseteq \mathcal{Y}$ be a set of objective vectors. The operator $\ND(\cdot)$ is defined such that $\ND (A)$ is the set of all non-dominated vectors in $A$, \ie, 
	\begin{equation}
	\ND(A) \myeq \argmax\limits_{B\in \Omega, B\subseteq A} |B|\;,
	\end{equation}
	where $\Omega$ is the set of all possible approximation sets as stated by Definition~\ref{def:approx_set}.
	\label{def:nd_operator}
\end{definition}
The pseudo-code of the proposed scheme is outlined in Algorithm~\ref{alg:mosoo}. \alg{MO-SOO} comes with three parameters, viz. i) the partition factor $K$, ii) the maximal depth function $h_{max}(t)$, iii) the splitting dimension per depth. All of these parameters contribute to the algorithm exploration-vs.-exploitation trade-off. Nevertheless, as it will be shown later,  $h_{max}(t)$ has the most compelling impact on \alg{MO-SOO} convergence.

\begin{algorithm}[tbp]
	\SetKwInOut{Input}{Input}
	\SetKwInOut{Variables}{Variables}
	\SetKwInOut{Output}{Output}
	\SetKwInOut{Initialization}{Initialization}
	\Input{vectorial function to be minimized $\myvec{f}$, \\
		search space $\mathcal{X}$,\\
		partition factor $K$, \\
		evaluation budget $v$, \\
		maximal depth function $t\to h_{max}(t)$
	}
	\Initialization{$\mathcal{T}_1=\{(0,0)\}$\\ $t\gets 1$}
	\Output{approximation set of $\min_{\myvec{x}\in \mathcal{X}}\myvec{f}(\myvec{x})$, $\mathcal{Y}^v_*$}
	\BlankLine
	
	\While{evaluation budget is not exhausted} {
		$\mathcal{V} \gets \emptyset$\\
		\For{$h\leftarrow 0$ \KwTo min$(h_{max}(t)$, depth($\mathcal{T}_t$))} {
			$\mathcal{P}_t\gets \{\text{leaf nodes at depth }h\}$ \label{ln:beg_iteration}\\
			$\mathcal{V} \gets \ND(\mathcal{P}_t \cup \mathcal{V})$ \\
			$\mathcal{Q}_t \gets \mathcal{P}_t \cap \mathcal{V}$\\
			\label{algln:q_expand}Expand all the nodes in $\mathcal{Q}_t$; evaluate and add to $\mathcal{T}_t$ their $K\cdot|\mathcal{Q}_t|$ children,
			\begin{equation}
			\mathcal{T}_{t+1} = \mathcal{T}_{t} \cup \big(\cup_{(h,i)\in \mathcal{Q}_t}\{(h+1,i_k)\}_{1\leq k\leq K}\big)
			\nonumber
			\end{equation} \\

			$t\gets t+1 $\label{ln:end_iteration}\\
		}
	}
	\Return{$\ND\big({\{\myvec{f}(\myvec{x}_{h,i}) \}_{(h,i)\in \mathcal{T}_t}\big)}$}
	\caption{\textbf{\alg{MO-SOO}}}
	\label{alg:mosoo}
\end{algorithm}
\subsection{A Worked Example}
\label{sec:mosoo-example}

For a better understanding of the \alg{MO-SOO} algorithm, we show its application to the following problem:

\begin{equation}
\begin{aligned}
& {\text{minimize}}
& & \myvec{y} = \myvec{f}(\myvec{x}) = (f_1(\myvec{x}), f_2(\myvec{x}))\\
& \text{s.t.}
& & \myvec{x} =  (x_1, x_2)\in \mathcal{X}=[-1,1]^2\;, \\
\end{aligned}
\label{eq:worked_example}
\end{equation}
where $f_1(\myvec{x})=(x_1-0.25)^2+(x_2-0.66)^2$ and $f_2(\myvec{x})=(x_1+0.25)^2+(x_2-0.66)^2$. Figure~\ref{fig:worked_example} shows the convergence of \alg{MO-SOO}'s approximation set~$A$ towards a sampled set (numerically-obtained) of the Pareto front at different stages of the algorithm iterations. The reader can refer to Figure~\ref{fig:worked_example} as we briefly describe the first stages of the algorithm.

\begin{figure}[tbp]
	\begin{center}
		\resizebox{1\textwidth}{!}{
			\renewcommand{\arraystretch}{1.1}
			\begin{tabular}{|@{}c@{}@{}c@{}|@{}c@{}@{}c@{}|
				}
				\toprule
				\includegraphics[width=7cm, trim = 40mm 80mm 40mm 60mm, clip]{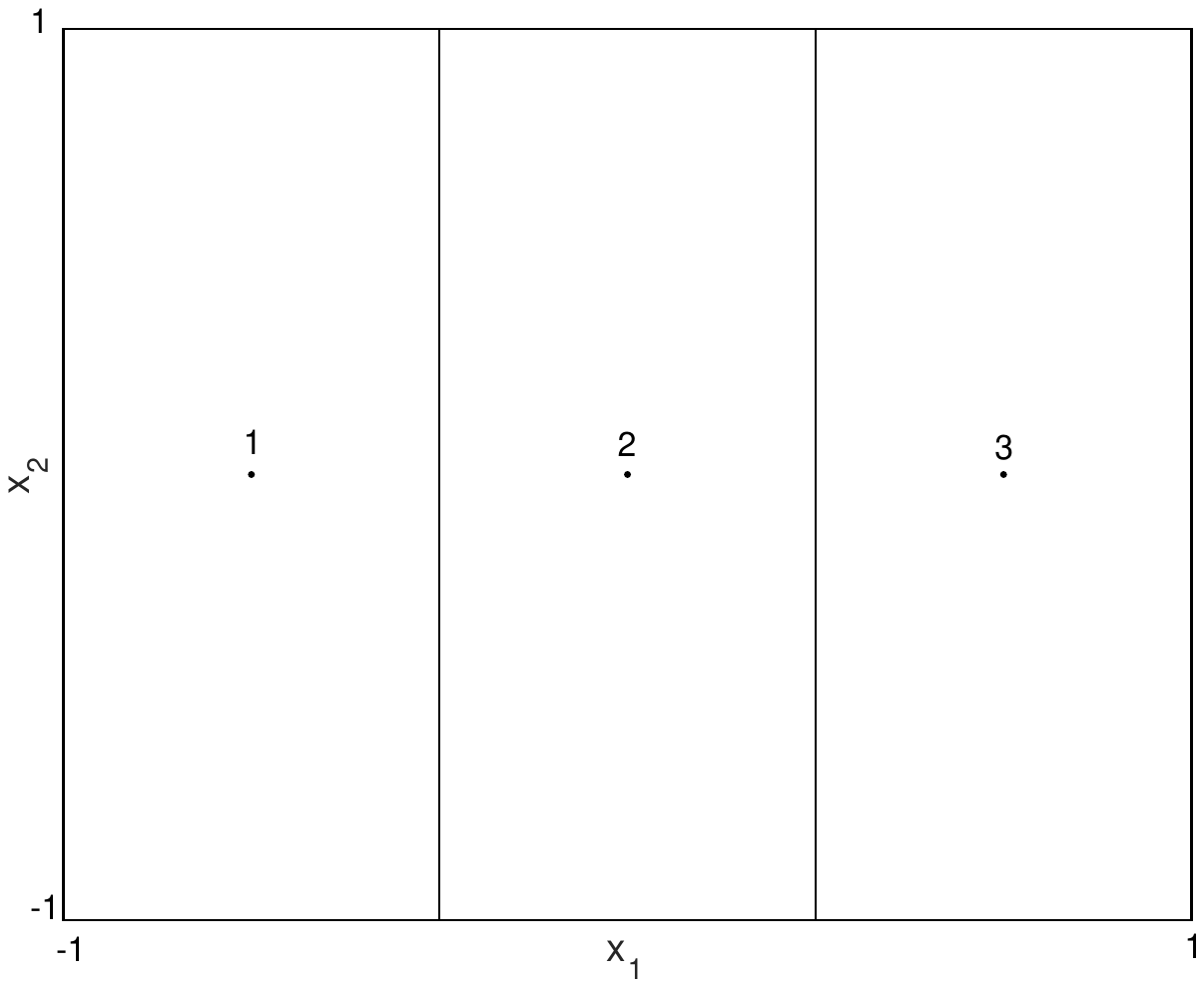} &
				\includegraphics[width=7cm, trim = 8mm 2mm 0mm 5mm, clip]{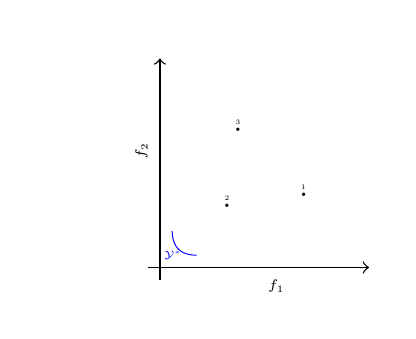}  &
				\includegraphics[width=7cm, trim = 40mm 80mm 40mm 60mm, clip]{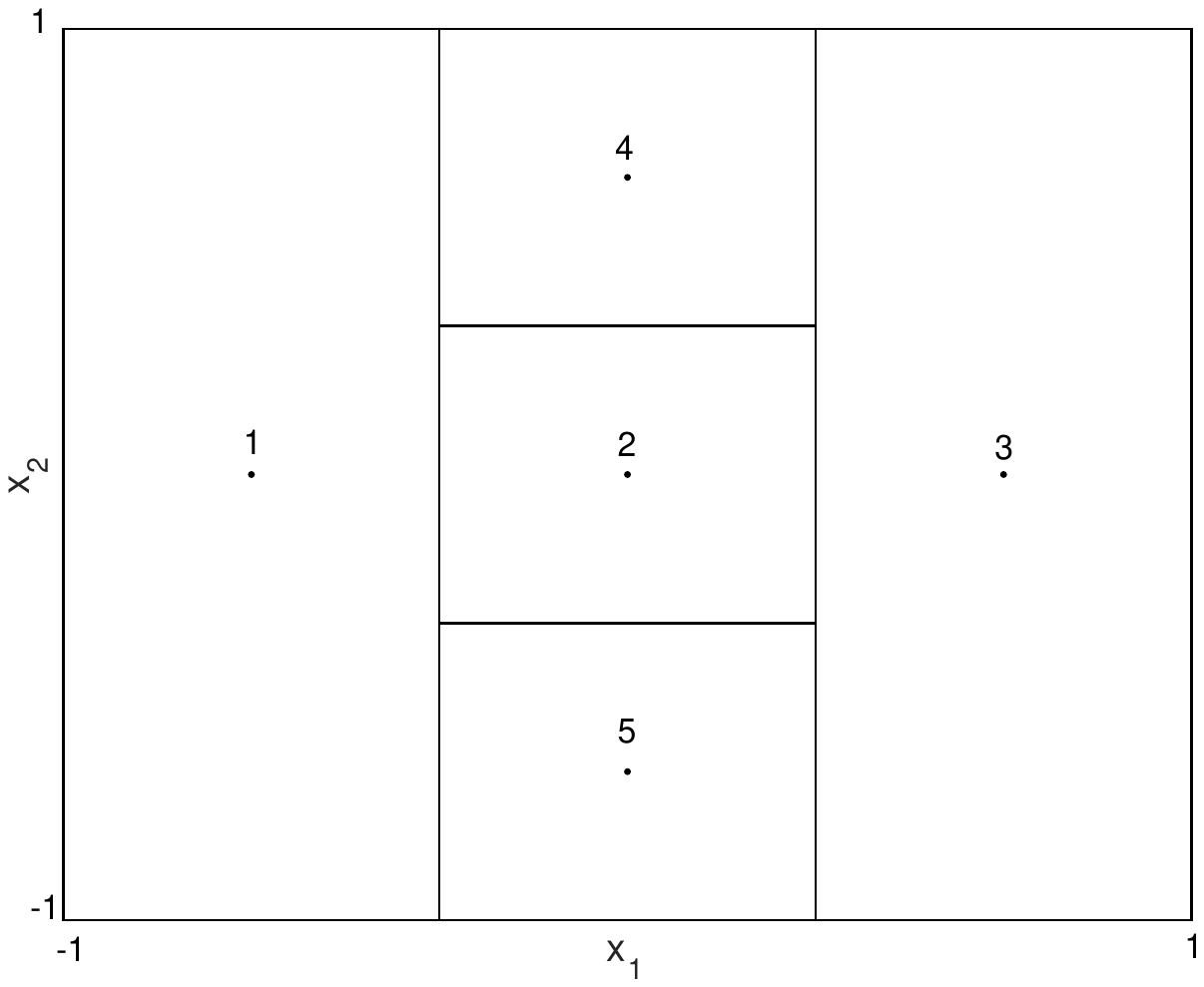} &
				\includegraphics[width=7cm, trim = 8mm 2mm 0mm 5mm, clip]{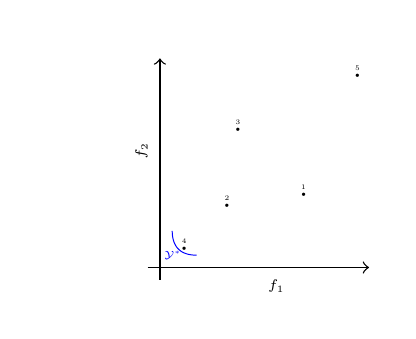} \\
				{Decision Space} & {Function Space} &{Decision Space} & {Function Space}\\										
				\multicolumn{2}{|c|}{\textbf{After 1 iteration}} & \multicolumn{2}{c|}{\textbf{After 2 iterations}}								
				\\	\midrule
				\includegraphics[width=7cm, trim = 40mm 80mm 40mm 60mm, clip]{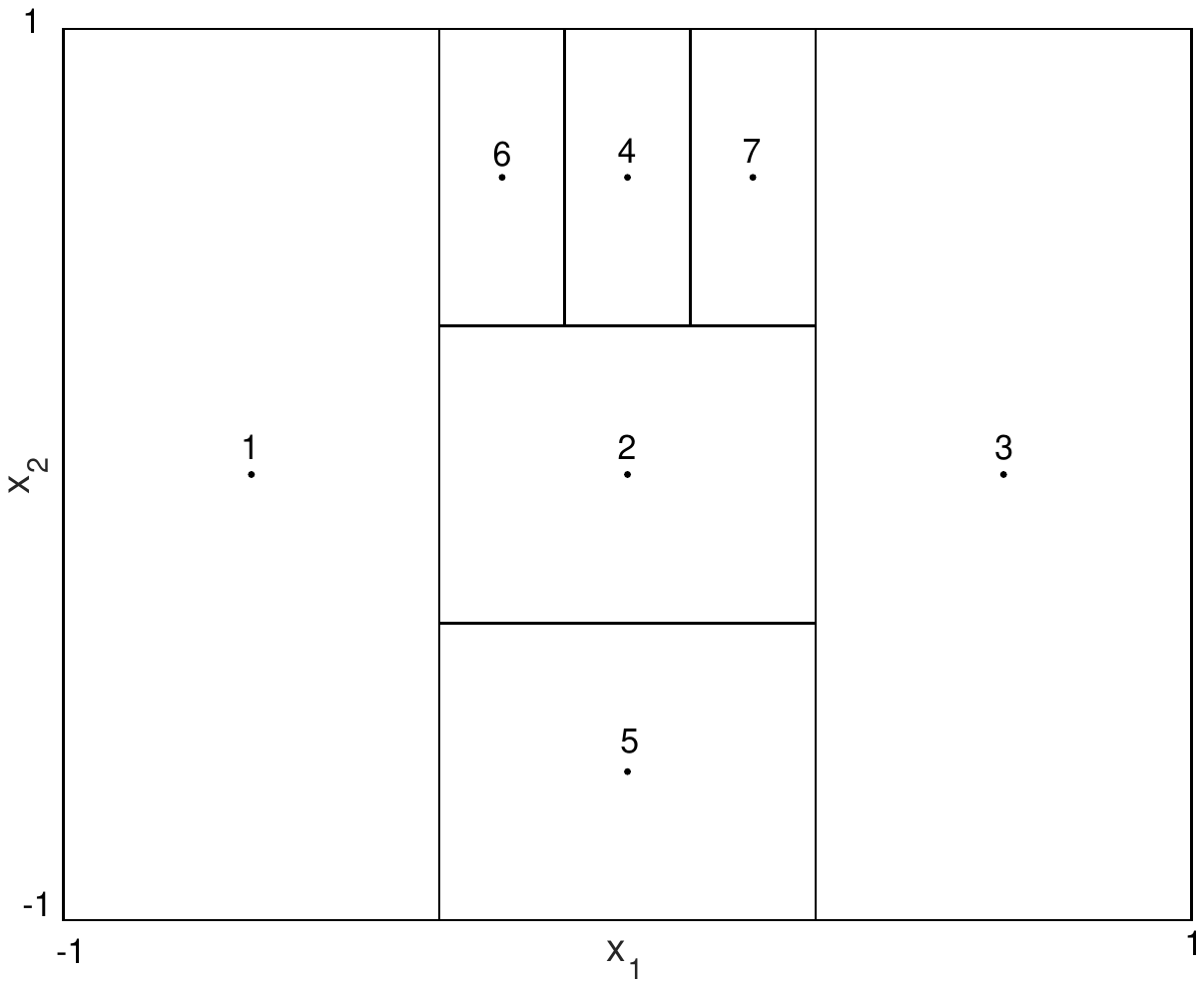} &
				\includegraphics[width=7cm, trim = 8mm 2mm 0mm 5mm, clip]{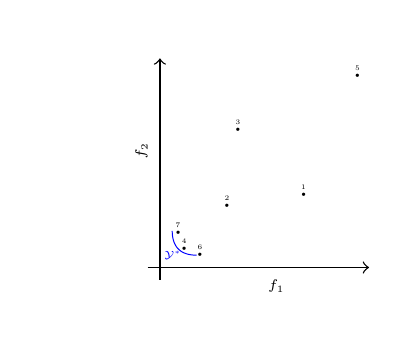}  &
				\includegraphics[width=7cm, trim = 40mm 80mm 40mm 60mm, clip]{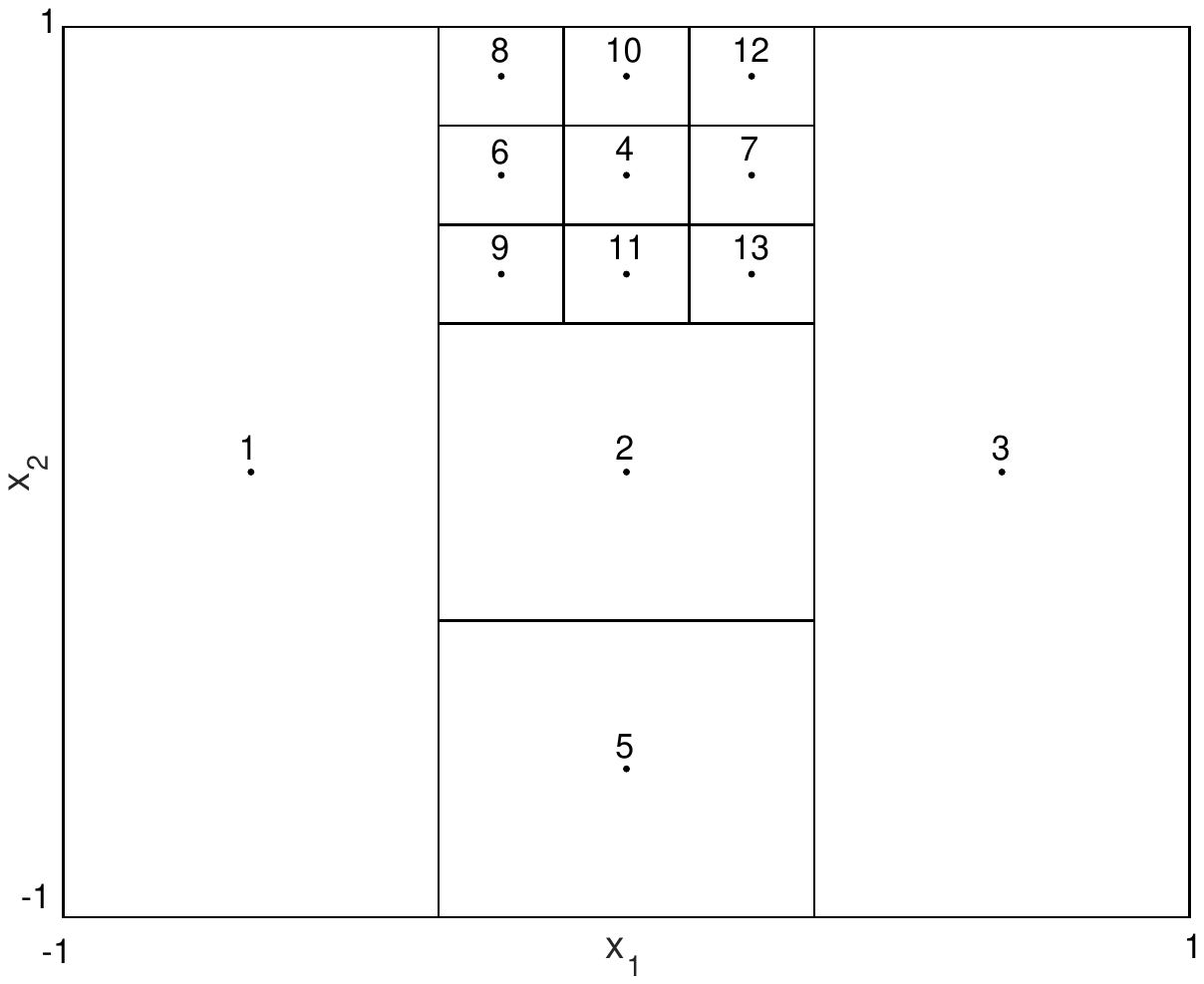} &
				\includegraphics[width=7cm, trim = 8mm 2mm 0mm 5mm, clip]{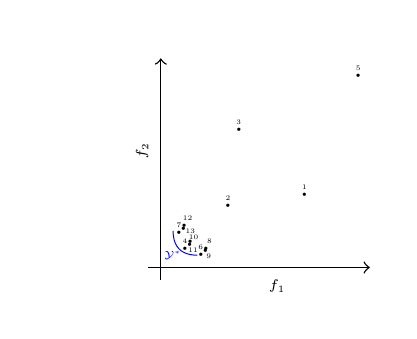}
				\\
				{Decision Space} & {Function Space} &{Decision Space} & {Function Space}\\										
				\multicolumn{2}{|c|}{\textbf{After 3 iterations}} & \multicolumn{2}{c|}{\textbf{After 4 iterations}}								
				\\	\midrule
				\includegraphics[width=7cm, trim = 40mm 80mm 40mm 60mm, clip]{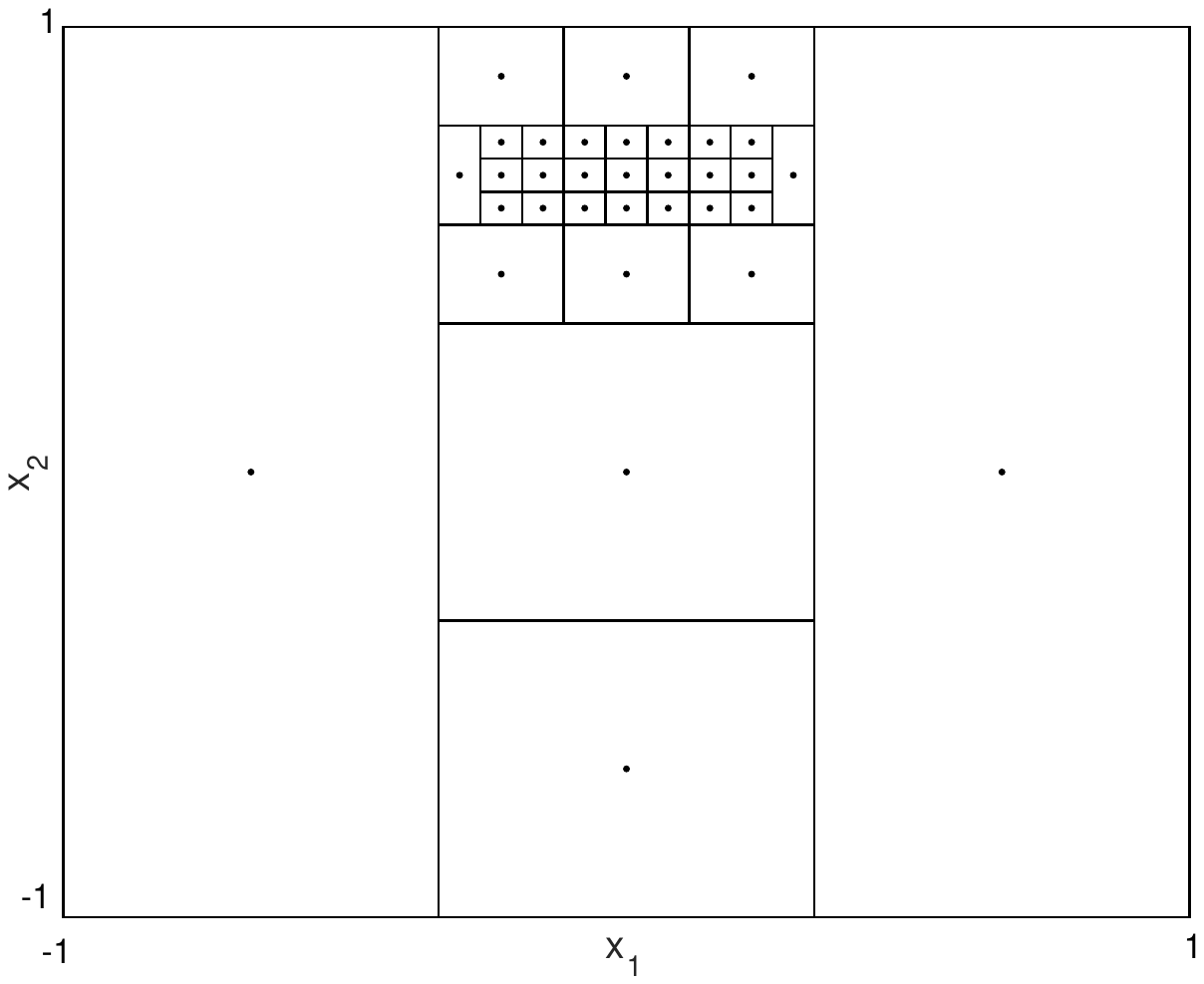} &
				\includegraphics[width=7cm, trim = 8mm 2mm 0mm 5mm, clip]{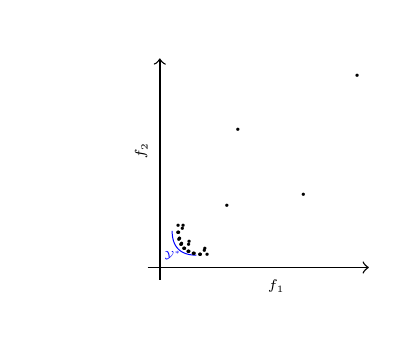}  &
				\includegraphics[width=7cm, trim = 40mm 80mm 40mm 60mm, clip]{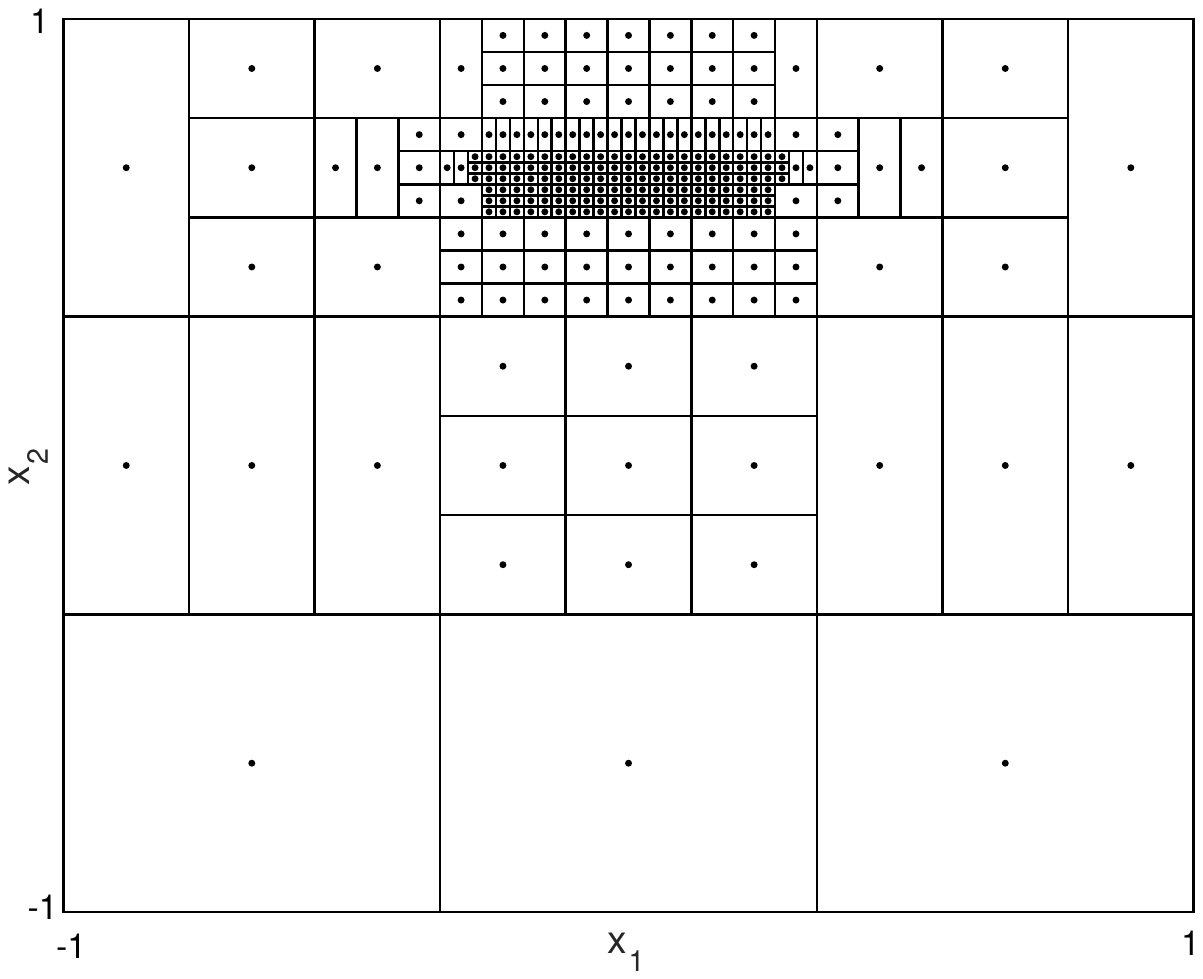} &
				\includegraphics[width=7cm, trim = 8mm 2mm 0mm 5mm, clip]{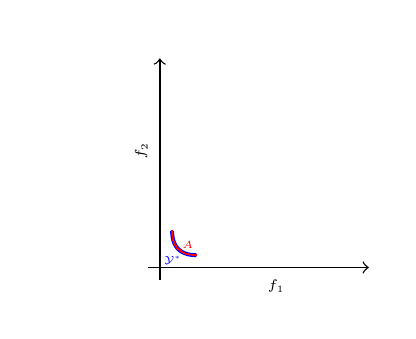}
				\\
				{Decision Space} & {Function Space} &{Decision Space} & {Function Space}\\										
				\multicolumn{2}{|c|}{\textbf{After 6 iterations}} & \multicolumn{2}{c|}{\textbf{After 20 iterations}}	\\									
				\bottomrule														
		\end{tabular}}
	\end{center}
	\caption{An illustration of \textsc{MO-SOO}. The algorithm expands its leaf nodes to look for Pareto optimal solutions by partitioning their cells along the decision space coordinates one at a time in a sequential manner, with a partition factor of~$K=3$. Sweeping its tree from the root node depth till the maximal depth specified by $h_{max}(t)$. \alg{MO-SOO} expands a set of leaf nodes per depth if they are non-dominated with respect to other leaf nodes in the same level and with respect to those expanded at lower depths in the current sweep. 
		Subsequently, none or more nodes can be expanded in one iteration; at the third iteration, for instance, only one node is expanded (whose representative state is point~4) into its children (whose representative states are the points 4, 6, and 7). On the other hand, at the fourth iteration three nodes are expanded (resp., representative states are the points 4, 6, and 7) into their children nodes (resp., representative states are the points 4, 6, 7, 8, 9, 10, 11, 12, and 13). After 20 iterations, \alg{MO-SOO}'s approximation set~$A$ closely coincides on a sampled set of the Pareto front~$\mathcal{Y}^*$ of problem~\eqref{eq:worked_example}. }
	
	\label{fig:worked_example}							
\end{figure}

\textit{Initialization}. \alg{MO-SOO} starts by initializing its tree with a root node $(0,0)$ whose cell represents the decision space, \ie, $\mathcal{X}_{0,0}=\mathcal{X}$. The root's representative state ~$\myvec{x}_{0,0}=(0,0)$---point 2 in Figure~\ref{fig:worked_example}---is evaluated and $\myvec{f}(\myvec{x}_{0,0})$ is obtained.

\textit{Iteration 1}. At this iteration, leaf nodes at depth~$h=0$ are considered for expansion. In other words,  the root node is expanded by partitioning its cell along the first dimension of the decision space into $K=3$ cells. Here, $\mathcal{P}_1=\mathcal{Q}_1=\{(0,0)\}$. For convenience, we shall refer to the nodes by their representative states, \ie, $\mathcal{P}_1=\mathcal{Q}_1=\{2\}$. The newly generated leaf nodes are added to the tree and evaluated at their representative states viz. the points 1, 2, and 3 in Figure~\ref{fig:worked_example}. Note that having an odd partition factor ($K=3$) saves one function evaluation for each node expansion (point 2 was already evaluated).

\textit{Iteration 2}. At this iteration, leaf nodes at depth~$h=1$  are considered for expansion. We have $\mathcal{P}_2=\{1,2,3\}$ and $\mathcal{V}=\{2\}$. Along with Lines~\ref{ln:beg_iteration}--\ref{ln:end_iteration} of Algorithm~\ref{alg:mosoo}, $\mathcal{Q}_2$ becomes $\{2\}$, because point 2 dominates both points 1 and 3 as it can noted in the function space. Thus, node 2 is expanded and the tree grows to have the leaves~$\mathcal{L}=\{1,\ldots,5\}$, each being evaluated at its representative state. The case is the same for \textit{iteration 3} which considers nodes at $h=2$ generating a new set of leaves $\mathcal{L}=\{1,\ldots,7\}$.

\textit{Iteration 4}.  At this iteration, leaf nodes at depth~$h=3$ are considered  for expansion.  Here, $\mathcal{P}_4=\{4,6,7\}$ and $\mathcal{V}=\{4,6,7\}$. Along with Lines~\ref{ln:beg_iteration}--\ref{ln:end_iteration} of Algorithm~\ref{alg:mosoo}, $\mathcal{Q}_4$ becomes $\{4,6,7\}$, because the points 4, 6, and 7 are non-dominated with respect to the nodes in $\mathcal{P}_4$ and $\mathcal{V}$ as it can seen in the function space. Thus, they all are expanded and the tree grows to have the leaves~$\mathcal{L}=\{1,\ldots,13\}$.

\textit{Next Iterations}. The same holds for the next iterations until the maximal depth---specified by $h_{max}(t)$---is reached. Then, $\mathcal{V}$ is set to $\emptyset$ and the tree is swept again from its root. After some iterations, \alg{MO-SOO} closely approximates the Pareto front as shown in Figure~\ref{fig:worked_example}.

\section{Convergence Analysis}
\label{sec:conv}
The analysis of multi-objective solvers is hindered by several issues; namely the diversity of approximation sets, the complexity of the Pareto front, and the convergence of approximation sets to the Pareto front \cite{coello2002evolutionary}. While most theoretical convergence studies have addressed finite-set and/or discrete problems \cite{rudolph1998evolutionary,kumar2005running}, others have provided probabilistic guarantees~\cite{hanne1999convergence}, assumed a total order on the solutions \cite{gabor1998multi}, or studied their asymptotic behavior~\cite{custodio2011direct}. In this paper, we take a different approach and study \alg{MO-SOO}'s convergence in terms of two aspects: i). finite-time; and ii). asymptotic behavior.

First, the finite-time convergence of \alg{MO-SOO} is studied with respect to the Pareto-compliant quality indicator,\footnote{The quality of an approximation set is measured by a so-called (unary) quality indicator $I:\Omega\to \mathbb{R}$, assessing a specific property of the approximation set. Likewise, an $l$-ary quality indicator $I:\Omega^l\to \mathbb{R}$ quantifies quality differences between $l$ approximation sets~\cite{zitzler2003performance,custodio2011direct}. A quality indicator is not Pareto-compliant if it contradicts the order induced by the Pareto-dominance relations.} the unary additive epsilon indicator \epsind~\cite{zitzler2003performance}, based on three assumptions. We do this in a two-step approach. First, we upper bound the loss measure introduced in Section~\ref{sec:from-soo-to-moo}, viz. $\myvec{r}(v)$ of  Eq.~\eqref{eq:loss}. The loss measure captures the convergence of \alg{MO-SOO}'s approximation set~$\mathcal{Y}^v_*$ to $m$ points---on the Pareto front---that contribute to the problem's ideal point $\myvec{y}^*$. Second, based on the presented loss bound and an intrinsic measure of the Pareto front (we refer to this measure as the conflict dimension $\Psi$), an upper bound on the unary additive epsilon indicator~\epsind~is established. Second, the convergence of \alg{MO-SOO}'s approximation set towards the Pareto front given unlimited number of function evaluations is addressed. In the light of the assumptions made for the finite-time analysis, \alg{MO-SOO}'s consistency is investigated. An algorithm is said to be consistent if it asymptotically converges to the Pareto front.

In general, the design of optimistic algorithms is driven by assumptions about the function smoothness. Here, we make three assumptions about the function $\myvec{f}$ and the hierarchical partitioning, based on those presented in \cite{munos2011optimistic,valkostochastic,wang2014bayesian} for single-objective settings. In essence, these assumptions let us express the quality of \alg{MO-SOO} solutions in relation to the number of iterations, by quantifying how much exploration is needed to expand nodes that contain objective-wise optimal solutions. The rest of this section is organized as follows. First, these assumptions are stated in Section~\ref{sec:assmptns}. Then,  in Section~\ref{sec:finite_time_performance}, the finite-time performance of \alg{MO-SOO} is analyzed, where we first upper bound the loss \eqref{eq:loss} as a function of the number of iterations~$t$.\footnote{Typically, $v$ in Eq.~\eqref{eq:loss} and the approximation set~$\mathcal{Y}^v_*$ represents the number of sampled points (function evaluations). Nevertheless, one can express the loss (and likewise the approximation set) with other growing-with-time quantities (\eg, the number of iterations, the number of node expansions). In the rest of this paper, we refer to the number of the: function evaluations and iterations, by $v$ and $t$, respectively, where one iteration represents executing the lines~\ref{ln:beg_iteration}--\ref{ln:end_iteration} of Algorithm~\ref{alg:mosoo}, once.} Second, this objective-wise loss bound is employed to establish an upper bound on the~\epsind~indicator, which holds down to the conflict dimension of the problem at hand. After presenting the main result on the finite-time performance of the algorithm, \alg{MO-SOO}'s consistency property is proved in Section~\ref{sec:asymptotic} and illustrative examples are given in Section~\ref{sec:illust}. Towards the end of this section, an empirical validation of the theoretical findings is presented.

\subsection{Assumptions} 
\label{sec:assmptns}
There exists a vector-valued function $\Bell:\mathcal{X}\times \mathcal{X}\to {\mathbb{R}^+}^m$ such that each entry $\{\ell_j\}_{1\leq j \leq m}$ is a semi-metric 
such that:
\begin{enumerate}[label=A\arabic*]
	\item \label{asmp:smoothness}\textbf{(H\"{o}lder continuity of $f_1,\ldots,f_m$)}:
	
	\begin{equation}
	|f_j(\myvec{x}) - f_j(\myvec{y})| \leq \ell_j(\myvec{x},\myvec{y}),\;\;\forall \myvec{x},\myvec{y}\in\mathcal{X}, j=1,\ldots,m\;\;.\nonumber
	\end{equation}
	\item \label{asmp:bounded_cell}\textbf{(bounded cells diameters)}: For $j=1,\ldots,m$ and $\forall (h,i) \in \mathcal{T}$,
	$\exists$ a non-increasing sequence $\delta_j(h)>0$ such that
	\begin{equation}
	\sup_{\myvec{x}\in\mathcal{X}_{h,i}} \ell_j(\myvec{x}_{h,i},\myvec{x})\leq \delta_j(h)
	\nonumber
	\end{equation}  and $\lim_{h\to \infty}\delta_j(h)=0$. Thus, ensuring the regularity of the cells' sizes which decrease with their depths in $\mathcal{T}$.
	\item \label{asmp:shaped_cells}\textbf{(well-shaped cells)}: For $j=1,\ldots,m$ and $\forall (h,i) \in \mathcal{T}$,
	$\exists$ $s_j>0$ such that a cell $\mathcal{X}_{h,i}$ contains an $\ell_j$-ball of radius $s_j\delta_j(h)$ centered in $\myvec{x}_{h,i}$. Thus, ensuring that the cells' shapes are not skewed in some dimensions.
\end{enumerate}

\begin{remark}
	The class of functions that satisfies Assumption~\ref{asmp:smoothness} is very broad. In fact, it has been shown in \cite{pinter1995,modifications_direct} that among the Lipschitz-continuous functions (which are a subset of such functions) are convex/concave functions over a closed domain and continuously differentiable functions.
	\label{rem:class_holder_fn}
\end{remark}
\subsection{Finite-Time Performance}
\label{sec:finite_time_performance}
In this section, we characterize the finite-time performance of \alg{MO-SOO} in terms of the Pareto-compliant unary additive  epsilon indicator based on the assumptions presented in Section~\ref{sec:assmptns}. To this end, we upper bound the loss measure~\eqref{eq:loss} with respect to the number of iterations~$t$. 	This provides the basis upon which a bound for the $\epsilon$-indicator is established with respect to the same.

\subsubsection{Bounding the Loss Measure}
\label{sec:bounding_loss}
In order to derive a bound on the loss, we employ a measure of the quantity of objective-wise near-optimal solutions (states in $\mathcal{X}$), called the \textit{near-optimality dimension}, which is closely related to similar measures (see, \eg,\cite{kleinberg2008multi,bubeck2009online,munos2011optimistic}). Before defining the near-optimality dimension, some terminology, which will be used in the analysis besides the terminology of Section~\ref{sec:bg:bbmo}, is introduced.

For $j=1,\ldots,m$; and for any $\epsilon>0$; let us denote the set of $\epsilon$-optimal \textit{states} according to $f_j$, $\{\myvec{x} \in \mathcal{X} : f_j(\myvec{x})\leq f_j(\myvec{x}^*_j)+\epsilon \}$, by $\mathcal{X}_j^\epsilon$, as depicted in Figure~\ref{fig:conv_preliminary}. Subsequently, denote the set of \textit{nodes} at depth $h$ whose representative states are in $\mathcal{X}_j^{\delta_j(h)}$ by $\mathcal{I}_j^h$, \ie, $\mathcal{I}_j^h\myeq\{(h,i)\in \mathcal{T}: 0\leq i\leq K^h-1, \myvec{x}_{h,i}\in\mathcal{X}_j^{\delta_j(h)} \}$. 
A node $(h,i)$ is Pareto optimal $\iff \exists \myvec{x} \in \mathcal{X}^* : \myvec{x}\in \mathcal{X}_{h,i}$. Furthermore, a Pareto optimal node $(h,i)$ is $j$-optimal $\iff$ it is optimal with respect to $f_j$, \ie, $\myvec{x}^*_j \in \mathcal{X}_{h,i}$. After $t$ iterations, one can denote the depth of the \text{deepest expanded $j$-optimal} node by $h^{*}_{j,t}$ (as illustrated in Figure~\ref{fig:hierarchical_partition}).
Now, we define the near-optimality dimension for~$f_j$: 
\begin{definition}[$s_j$-near-optimality dimension]
	The $s_j$-near-optimality dimension for $\{f_j\}_{1\leq j \leq m}$ is the smallest $d_{s_j} \geq 0$ such
	that there exists $C_j > 0$ and for any $\epsilon > 0$, the maximal number of disjoint $\ell_j$-balls of radius $s_j\epsilon$
	and center in $\mathcal{X}_j^\epsilon$ is less than $C_j{\epsilon}^{-d_{s_j}}$.
	\label{def:near_opt_dim}
\end{definition}

One can note that $d_{s_j}$ is characterized by: the function~$f_j$, the semi-metric~$\ell_j$, and the scaling factor $s_j$, \ie, it depends on the objectives smoothness and related to the partitioning strategy of the space through the scaling factor $s_j$
. Based on Assumption~\ref{asmp:shaped_cells} and Definition~\ref{def:near_opt_dim}, we have:

\begin{equation}
|\mathcal{I}^h_j|\leq C_j{\delta_j(h)}^{-d_{s_j}}\;.\footnote{ See the proof of \cite[Lemma 1]{munos2011optimistic}. We reproduce and adapt the proof here for completeness: From Assumption~\ref{asmp:shaped_cells}, each cell $\mathcal{X}_{h, i}$ contains a ball of radius $s_j\delta_j(h)$ centered in $x_{h,i}$, thus if
	$|\mathcal{I}^h_j| = |\{x_{h,i} \in \mathcal{X}^{\delta_j(h)}_j\}|$ exceeded $C_j{\delta_j(h)}^{-d_{s_j}}$, this would mean that there exists more than $C_j{\delta_j(h)}^{-d_{s_j}}$
	disjoint $\ell_j$-balls of radius $s_j\delta_j(h)$ with center in $\mathcal{X}^{\delta_j(h)}_{j}$, which contradicts the definition of $s_j$-near-optimality-dimension.}
\label{eq:optimal_nodes_bound}
\end{equation}
\begin{figure}[tb]
	\begin{center}
		\renewcommand{\arraystretch}{1.2}
		\includegraphics[scale=1.2,trim=5 10 2 10 mm, clip=true]{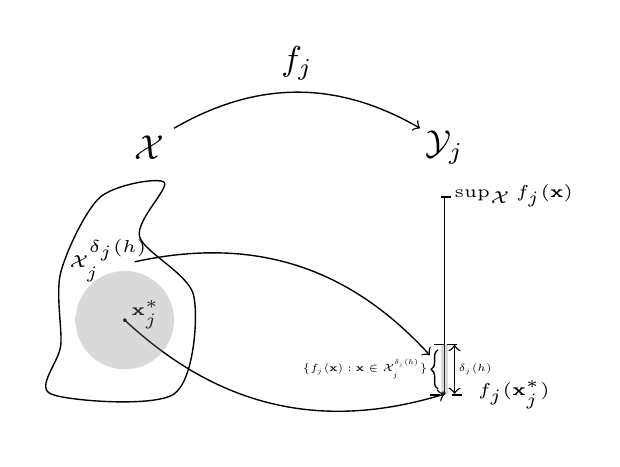}
	\end{center}
	\caption{The feasible decision space and the corresponding $j$th objective space ($\mathcal{Y}_j\subseteq \mathbb{R}$). The global optimizer $\myvec{x}^*_j$ and any solution $\myvec{x}$ whose image under the $j$th objective lies within $\{f_j(\myvec{x})\leq f_j(\myvec{x}^*_j) + \delta_j(h)\}$ are denoted by $\mathcal{X}_j^{\delta_j(h)}$.}
	\label{fig:conv_preliminary}		
\end{figure}

\begin{figure}[tb]
	\begin{center}
		\renewcommand{\arraystretch}{1.2}
		\includegraphics[scale=0.75,trim=5 10 2 10 mm, clip=true]{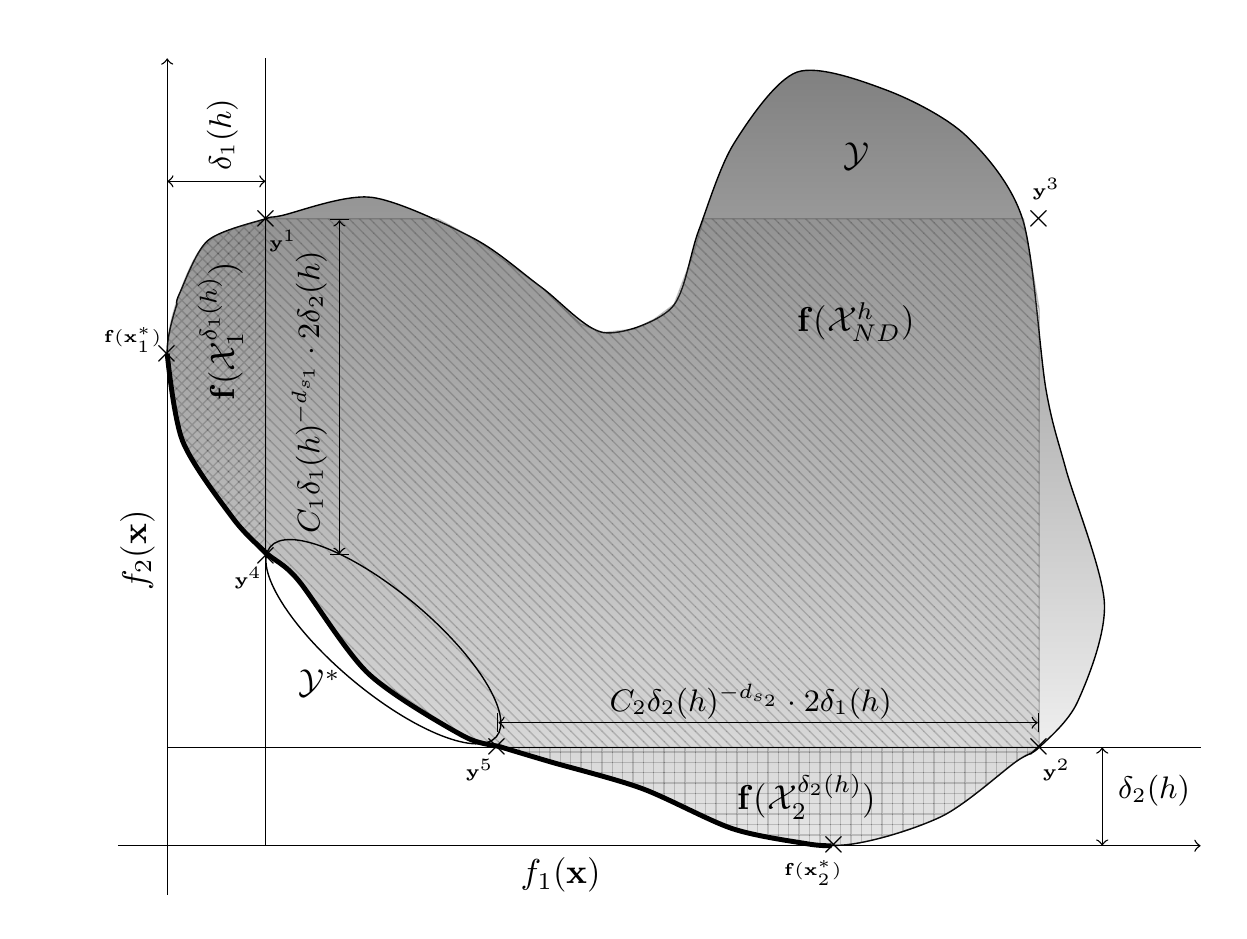}
	\end{center}
	\caption{The objective space~$\mathcal{Y}$ for a multi-objective optimization problem ($m=2$). The solid curve marks the Pareto front $\mathcal{Y}^*$. At depth $h$, assuming the $\{j\}_{j=1,2}$-optimal nodes are not expanded yet, one can use the $\mathtt{ND}_{min}(\cdot)$ operator, which causes nodes whose representative states lie in the decision space portion $\cup_{j={1,2}} \mathcal{X}^{\delta_j(h)}_j$ to be expanded before others. However, this may hold up discovering other parts of the Pareto front (the circled region). This is not the case with the $\texttt{ND}(\cdot)$ operator, where another region $\mathcal{X}^h_{ND}$---whose image in the objective space is bounded by the Pareto front and the points $\mathbf{y}^1$, $\mathbf{y}^2$, and $\mathbf{y}^3=\myvec{y}^{nadir}(\myvec{f}(\cup_{j=1,2}\mathcal{X}^{\delta_j(h)}_j))$---is as well considered. Let the considered depth at iteration $t$ be $h$ and the depth of the deepest $\{j\}_{j=1,2}$-optimal nodes be $h-1$. Then, prior to expanding the $\{j\}_{j=1,2}$-optimal nodes at depth $h$, the set $\mathcal{Q}_t$ (of Algorithm~\ref{alg:mosoo}) comprises of at most $3$ types of nodes whose representative states lie in $\mathcal{X}^{\delta_1(h)}_1, \mathcal{X}^{\delta_2(h)}_2$, and $\mathcal{X}^h_{ND}$, respectively. Furthermore, one can note from Eq.~\eqref{eq:optimal_nodes_bound} as well as Assumptions~\ref{asmp:smoothness} and~\ref{asmp:bounded_cell} that the point $\myvec{y}^1$  is greater than or equal $\myvec{y}^4$, and hence $\myvec{f}(\myvec{x}^*_1)$, along $f_2$ by at most $C_1\delta_1(h)^{-d_{s_1}}\cdot2\delta_2(h)$, that is to say $y^1_2-f_2(\myvec{x}^*_1)\leq C_1\delta_1(h)^{-d_{s_1}}\cdot2\delta_2(h)$; similar argument can be made between the points $\myvec{y}^2$ and $\myvec{y}^5$ along $f_1$. This observation is the main ingredient in the proof of Theorem~\ref{thm:indicator_mosoo} (more in Section~\ref{sec:bounding_additive_indicator}).
	}
	\label{fig:conv_preliminary_1}		
\end{figure}
Now, let us assume for simplicity that the $\ND(\cdot)$ operator in Algorithm~\ref{alg:mosoo} is replaced by $\ND_{min}(A)=\cup_{1\leq j\leq m}\arg\min_{\myvec{y}\in A}y_j$; that is to say, in each iteration, $m$ or less nodes are expanded whose representative objective vectors $\myvec{f}(\myvec{x}_{h,i})$ have the minimum entries with respect to the $m$ objectives. Furthermore, for $j=1,\ldots, m$; assume that $h^*_{j,t}=\acute{h}$ and denote the $j$-optimal node at depth $\acute{h}+1$ by $(\acute{h}+1,j^*)$. Since $(\acute{h}+1,j^*)$ has not been expanded yet, any node at depth $\acute{h}+1$ that is selected at later iterations  and expanded before $(\acute{h}+1, j^*)$ (line~\ref{algln:q_expand} in Algorithm~\ref{alg:mosoo}) must satisfy the following:
\begin{align}
f_j(\myvec{x}_{\acute{h}+1,i}) &\leq  f_j(\myvec{x}_{\acute{h}+1,j^*}) \nonumber \\
f_j(\myvec{x}_{\acute{h}+1,i})& \leq  f_j(\myvec{x}^*_j) + \delta_j(\acute{h}+1) \label{eq:optimal_nodes}
\end{align}
where inequality ~\eqref{eq:optimal_nodes} comes from combining Assumptions \ref{asmp:smoothness} and \ref{asmp:bounded_cell}: $f_j(\myvec{x}_{\acute{h}+1,j^*})\leq f_j(\myvec{x}^*_j) + \ell_j(\myvec{x}_{\acute{h}+1,j^*},\myvec{x}^*_j) \leq  f_j(\myvec{x}^*_j) + \delta_j(\acute{h}+1)$. As defined earlier, $\mathcal{X}^{\delta_j(h)}_j$ satisfies Eq.~\eqref{eq:optimal_nodes} (depicted in Figure~\ref{fig:conv_preliminary_1}, for $m=2$).
Thus, from the definition of $\mathcal{I}^h_j$ and since all the objectives are considered simultaneously, we are certain that $\{(\acute{h}+1,j^*)\}_{1\leq j \leq m}$ get expanded after  $\sum_{j=1}^{m}|\mathcal{I}^{\acute{h}+1}_j|$ node expansions at depth $\acute{h}+1$ in the worst-case scenario. Nevertheless, such definition of the $\ND_{min}(\cdot)$ operator favors exploring $\{\mathcal{X}^{\delta_j(h)}_j\}_{1\leq j \leq m}$ over other regions, which delays the search for other Pareto points outside these regions (see, for instance, the circled region in Figure~\ref{fig:conv_preliminary_1}).   Using the $\ND(\cdot)$ operator from Definition~\ref{def:nd_operator} rectifies this behavior: by expanding non-dominated nodes, \alg{MO-SOO} explores as well the region $\{\myvec{x}: \myvec{f}(\myvec{x}) \prec \myvec{y}^{nadir}(\myvec{f}(\cup_{j=1,2}\mathcal{X}^{\delta_j(h)}_j))\}-\cup_{j=1,2}\mathcal{X}^{\delta_j(h)}_j$ denoted by $\mathcal{X}^h_{ND}$ (see Figure~\ref{fig:conv_preliminary_1}). While we are able to quantify---based on the near-optimality dimension---the number of nodes within $\{\mathcal{X}_j^{\delta_j(h)}\}_{1\leq j \leq m}$, similar analysis gets unnecessarily complicated for $\mathcal{X}^h_{ND}$. However, since $\ND(\cdot)$ expands---besides other nodes---the same set of nodes that would have been selected by $\ND_{min}(\cdot)$, we know that at most $|\mathcal{I}^{\acute{h}+1}_j|$ iterations at depth $\acute{h}+1$ are needed to expand the optimal node $(\acute{h}+1,j^*)$.   From this observation, the following lemma is deduced.
\vspace{1mm}
\begin{lem} In \alg{MO-SOO}, after $t$ iterations, for any depth $0\leq h \leq h_{max}(t)$ whenever
	\begin{equation}
	h_{max}(t)\cdot \sum_{l=0}^{h}\max_{1\leq j\leq m}|\mathcal{I}^l_j|\leq t\;, \label{eq:lm_mosoo}
	\end{equation}
	we have $\{h^*_{j,t}\}_{1 \leq j \leq m}\geq h$.
	\label{lm:mosoo}
\end{lem}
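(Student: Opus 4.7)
The plan is to carry out an induction on the depth $l\in\{0,1,\ldots,h\}$, bounding from above the iteration index $T_l$ at which every $j$-optimal node at depth $l$, namely $\{(l,j^*)\}_{1\le j\le m}$, has been expanded by \alg{MO-SOO}. Because a node $(l,j^*)$ can only be expanded after its ancestor $(l-1,j^*)$ has been expanded (the latter being how $(l,j^*)$ comes to exist as a leaf in the first place), establishing $T_h\le t$ will immediately yield $h^*_{j,t}\ge h$ for every $j$, which is the desired conclusion.

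The first ingredient is the per-depth iteration count that is already implicit in the discussion preceding the lemma: once $(l-1,j^*)$ has been expanded, at most $|\mathcal{I}^l_j|$ subsequent iterations at depth $l$ are needed before $(l,j^*)$ itself is expanded under $\ND(\cdot)$. I would formalize this by noting that any node $(l,i)$ expanded in place of $(l,j^*)$ must in particular be non-dominated with respect to the $\ND_{min}(\cdot)$-selected node that minimizes $f_j$; Assumptions~\ref{asmp:smoothness} and~\ref{asmp:bounded_cell} then force $\myvec{x}_{l,i}\in\mathcal{X}^{\delta_j(l)}_j$, and the cardinality bound in Eq.~\eqref{eq:optimal_nodes_bound} limits how many such rivals can coexist. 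Taking the maximum over the $m$ objectives, $\max_{1\le j\le m}|\mathcal{I}^l_j|$ iterations at depth $l$ suffice to expand all of $\{(l,j^*)\}_{1\le j\le m}$.

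The second ingredient is the sweep structure of Algorithm~\ref{alg:mosoo}: the outer loop resets $\mathcal{V}\gets\emptyset$ and has the inner loop visit the depths $0,1,\ldots,h_{max}(t)$ in order, performing exactly one inner-loop iteration per depth. Therefore, iterations at a fixed depth $l\le h_{max}(t)$ recur with period at most $h_{max}(t)$ in the cumulative iteration index (up to a boundary constant, absorbed below via $|\mathcal{I}^0_j|=1$). Combined with the per-depth count, this gives the recurrence
\begin{equation}
T_l\;\le\;T_{l-1}+h_{max}(t)\cdot\max_{1\le j\le m}|\mathcal{I}^l_j|\;,\qquad T_0\le h_{max}(t)\;,
\end{equation}
and telescoping yields $T_h\le h_{max}(t)\cdot\sum_{l=0}^{h}\max_{1\le j\le m}|\mathcal{I}^l_j|$, so the hypothesis of the lemma guarantees $T_h\le t$.

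The main obstacle I foresee is the careful bookkeeping of iterations at depth $l$ that occur \emph{before} $(l-1,j^*)$ has been expanded: such iterations are ``wasted'' in the sense that they cannot possibly expand $(l,j^*)$, and the argument must start the per-depth count of $\max_j|\mathcal{I}^l_j|$ useful iterations only after $T_{l-1}$ without inflating the final bound beyond the claimed $h_{max}(t)\cdot\sum_{l=0}^{h}\max_j|\mathcal{I}^l_j|$. A related subtlety is that $\ND(\cdot)$ may expand several nodes simultaneously in a single iteration---so one iteration at depth $l$ can produce $(l,j^*)$ for multiple $j$ at once---which is exactly what makes the $\max_j$ (rather than $\sum_j$) appear in the lemma and is what lets the telescoping close cleanly.
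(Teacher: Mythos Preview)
Your proposal is correct and follows essentially the same approach as the paper: an induction on depth, using (i) that at most $\max_{j}|\mathcal{I}^l_j|$ iterations at depth $l$ are needed after the $j$-optimal nodes at depth $l-1$ are expanded (since $\ND(\cdot)$ always contains the $\ND_{min}(\cdot)$ picks), and (ii) that the sweep structure contributes the multiplicative factor $h_{max}(t)$; the paper phrases this as a direct induction on the lemma's statement while you package it as a telescoping bound on $T_l$, but the substance is identical. One minor caution: your justification sentence ``any node $(l,i)$ expanded in place of $(l,j^*)$ must \ldots\ force $\myvec{x}_{l,i}\in\mathcal{X}^{\delta_j(l)}_j$'' overstates things---nodes in $\mathcal{X}^l_{ND}$ can also be expanded---so keep the argument at the level of ``$\ND(\cdot)\supseteq\ND_{min}(\cdot)$, hence each iteration at depth $l$ consumes at least one member of $\mathcal{I}^l_j$,'' exactly as the paper does.
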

\begin{proof}We know that $\{h^*_{j,t}\}_{1\leq j \leq m}\geq 0$ and hence the above statement holds for $h=0$. For $0<h\leq h_{max}(t)$, we are going to prove it by induction. 

Assume that the statement holds for $0\leq h\leq \hat{h}<h_{max}(t)$. Let us then prove it for $h\geq\hat{h}+1$. Let $h_{max}(t)\cdot \sum_{l=0}^{\hat{h}+1}\max_{1\leq j\leq m}|\mathcal{I}^l_j|\leq t$, and hence, $h_{max}(t) \cdot \sum_{l=0}^{\hat{h}}\max_{1\leq j\leq m}|\mathcal{I}^l_j|$ is less than or equal to $t$ for which we know by our assumption that $\{h^*_{j,t}\}_{1\leq j \leq m}\geq \hat{h}$. Here, we have two cases: (i) $\{h^*_{j,t}\}_{1\leq j \leq m}\geq \hat{h}+1$, for which the proof is done; (ii) $\{h^*_{j,t}\}_{1\leq j \leq m}=\hat{h}$, for this case, the set of nodes expanded at depth $\hat{h}+1$ at each iteration, before the $\{j\}_{1\leq j \leq m}$-optimal nodes at the same depth, belong to $m+1$ sets (possibly overlapped) of nodes. Among theses sets, $m$ sets are from $\{\mathcal{I}^{\hat{h}+1}_j\}_{1\leq j \leq m}$, respectively; while the remaining set of nodes have their representative states in $\mathcal{X}^{\hat{h}+1}_{ND}-\cup_{j=1}^{m}\mathcal{X}^{\delta_j(\hat{h}+1)}_j$. As a result, at each iteration, there could be at least one node to be expanded from $\{\mathcal{I}^{\hat{h}+1}_j\}_{1\leq j\leq m}$, respectively. Since expanding all of these nodes takes at most $\max_{1\leq j\leq m}|\mathcal{I}^{\hat{h}+1}_j|$ iterations at depth $\hat{h}+1$; with a tree of depth $h_{max}(t)$, we are certain that the $\{j\}_{1\leq j \leq m}$-optimal node at depth $\hat{h}+1$ are expanded after at most $h_{max}(t)\max_{1\leq j\leq m}|\mathcal{I}^{\hat{h}+1}_j|$ iterations. Therefore, we have~$\{h^*_{j,t}\}_{1 \leq j \leq m}\geq h$.\end{proof}


In other words, the size of $\mathcal{I}^h_j$ gives a measure of how much exploration is needed, provided that the $j$-optimal node at depth $h-1$ has expanded; and this exploration is quantified by the near-optimality dimension.
The next theorem builds on Lemma \ref{lm:mosoo} to present a finite-time analysis of \alg{MO-SOO} in terms of a bound on the loss of Eq.~\eqref{eq:loss} as a function of the number of iterations $t$.

\begin{thm}[$\myvec{r}(t)$ for \alg{MO-SOO}] Let us define $h(t)$ as the smallest $h\geq0$ such that:	
	\begin{equation}
	h_{max}(t)\sum\limits_{l=0}^{h(t)}\max_{1\leq j \leq m}	C_j\delta_j(l)^{-d_{s_j}} \geq t \label{eq:thm_mosoo}
	\end{equation}
	where $t$ is the number of iterations.
	Then the loss of \alg{MO-SOO} is bounded as:
	\begin{equation}
	r_j(t)\leq \delta_j(\min(h(t),h_{max}(t)+1))\;,\; j=1,\ldots,m\;. \label{eq:loss_bound}
	\end{equation}
	\label{thm:mosoo}
\end{thm}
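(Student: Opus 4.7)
The plan is to combine Lemma~\ref{lm:mosoo} with the near-optimality-dimension bound $|\mathcal{I}^h_j| \leq C_j \delta_j(h)^{-d_{s_j}}$ from~\eqref{eq:optimal_nodes_bound} to obtain a lower bound on the deepest expanded $j$-optimal depth $h^*_{j,t}$, and then translate that depth bound into a loss bound by applying Assumptions~\ref{asmp:smoothness} and~\ref{asmp:bounded_cell} to the child of that node that still contains $\myvec{x}^*_j$.

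First, because $h(t)$ is defined as the \emph{smallest} $h$ for which the sum in~\eqref{eq:thm_mosoo} reaches $t$, for every $h < h(t)$ one has
\begin{equation*}
h_{max}(t) \sum_{l=0}^{h} \max_{1 \leq j \leq m} |\mathcal{I}^l_j| \;\leq\; h_{max}(t) \sum_{l=0}^{h} \max_{1 \leq j \leq m} C_j \delta_j(l)^{-d_{s_j}} \;<\; t.
\end{equation*}
Setting $\tilde h = \min(h(t)-1,\, h_{max}(t))$ and invoking Lemma~\ref{lm:mosoo} at depth $\tilde h$ gives $h^*_{j,t} \geq \tilde h$ simultaneously for every $j \in \{1,\dots,m\}$. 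Two regimes must be handled separately: when $h(t)-1 \leq h_{max}(t)$ the hypothesis of Lemma~\ref{lm:mosoo} is checked directly at $h = h(t)-1$; when $h(t) > h_{max}(t)+1$, monotonicity of the partial sums is used to transfer the strict inequality from index $h(t)-1$ down to $h_{max}(t)$ before invoking the lemma. The boundary case $h(t)=0$ is vacuous because the root is expanded after the first iteration, giving $h^*_{j,t} \geq 0$ trivially.

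Next, I convert the depth bound into a loss bound. Since the deepest $j$-optimal node at depth $h^*_{j,t}$ has been expanded (line~\ref{algln:q_expand} of Algorithm~\ref{alg:mosoo}), its $K$ children have been evaluated; one of them, call it $(h^*_{j,t}+1,\, j^*)$, still contains $\myvec{x}^*_j$ in its cell. Assumption~\ref{asmp:smoothness} followed by Assumption~\ref{asmp:bounded_cell} then yields
\begin{equation*}
f_j(\myvec{x}_{h^*_{j,t}+1,\, j^*}) - y^*_j \;\leq\; \ell_j(\myvec{x}_{h^*_{j,t}+1,\, j^*},\, \myvec{x}^*_j) \;\leq\; \delta_j(h^*_{j,t}+1).
\end{equation*}
Because $\myvec{f}(\myvec{x}_{h^*_{j,t}+1,\, j^*})$ either belongs to $\mathcal{Y}^t_*$ or is dominated by an element of $\mathcal{Y}^t_*$ (whose $j$th component is then no larger), we obtain $y^t_{*,j} \leq f_j(\myvec{x}_{h^*_{j,t}+1,\, j^*})$, and hence $r_j(t) \leq \delta_j(h^*_{j,t}+1)$. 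Chaining with $h^*_{j,t} \geq \tilde h$ and using that $\delta_j$ is non-increasing gives $r_j(t) \leq \delta_j(\tilde h + 1) = \delta_j(\min(h(t),\, h_{max}(t)+1))$, which is exactly~\eqref{eq:loss_bound}.

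The main obstacle I foresee is the bookkeeping around the $\min$ in the conclusion: specifically, verifying Lemma~\ref{lm:mosoo}'s hypothesis cleanly in the regime where the depth cap $h_{max}(t)$ — rather than $h(t)$ — is the binding constraint, and ensuring that the passage from the $j$-optimal child's evaluation to the $j$th component of $\myvec{y}^t_*$ remains valid even when that child's image is shadowed in the approximation set. Everything else is a routine assembly of Lemma~\ref{lm:mosoo}, the bound~\eqref{eq:optimal_nodes_bound}, and the Hölder/diameter assumptions.
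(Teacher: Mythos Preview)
Your proposal is correct and follows essentially the same route as the paper: bound $\max_j|\mathcal{I}^l_j|$ via~\eqref{eq:optimal_nodes_bound}, use the minimality of $h(t)$ to verify the hypothesis of Lemma~\ref{lm:mosoo} at depth $\min(h(t)-1,h_{max}(t))$, and then pass to the evaluated $j$-optimal child to bound $r_j(t)$ by $\delta_j(h^*_{j,t}+1)$. Your treatment is in fact slightly more careful than the paper's in two places---the explicit case split on which term of the $\min$ is active, and the domination argument linking $f_j(\myvec{x}_{h^*_{j,t}+1,j^*})$ to $\min_{\myvec{y}\in\mathcal{Y}^t_*}y_j$---but neither of the obstacles you flag is a genuine difficulty.
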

\noindent
\begin{proof}Since $|\mathcal{I}^h_j|\leq C_j{\delta_j(h)}^{-d_{s_j}}$ from Eq.~\eqref{eq:optimal_nodes_bound}; from the definition of $h(t)$~\eqref{eq:thm_mosoo}, we have:
\begin{equation}
h_{max}(t)\sum\limits_{l=0}^{h(t)-1}|\mathcal{I}^l_j|\leq h_{max}(t)\sum\limits_{l=0}^{h(t)-1}\max_{1\leq j \leq m}	C_j\delta_j(l)^{-d_{s_j}} < t \nonumber
\end{equation}
Thus, from Lemma \ref{lm:mosoo} and since $h_{max}(t)$ is the maximum depth at which nodes can be expanded, we have $\{h^*_{j,t}\}_{1 \leq j \leq m}\geq \min(h(t)-1,h_{max}(t))$.
Now, let $(h^*_{j,t}+1,j^*)$ be the deepest non-expanded $j$-optimal node (which is a child node of the deepest expanded $j$-optimal node at depth $h^*_{j,t}$ and its representative state~$\myvec{x}_{h^*_{j,t}+1,j^*}$ has been evaluated), then the loss with respect to the $j$th objective is bounded, based on Assumption~\ref{asmp:bounded_cell}, as:
\begin{equation}
r_j(t) \leq f(\myvec{x}_{h^*_{j,t}+1,j^*})-f(\myvec{x}_j^*) \leq \delta_j(h^*_{j,t}+1)\;.\nonumber
\end{equation}
Since $\{h^*_{j,t}\}_{1\leq j \leq m}\geq \min(h(t)-1,h_{max}(t))$, we have 	$r_j(t)\leq \delta_j(\min(h(t),h_{max}(t)+1))$, for $j=1,\ldots,m$.\end{proof}

\subsubsection{Bounding the Additive  Epsilon Indicator}
\label{sec:bounding_additive_indicator} Within the context of multi-objective optimization and after $t$ iterations, the vectorial loss $\myvec{r}(t)$ of Eq.~\eqref{eq:loss} does not explicitly capture the quality of \alg{MO-SOO}'s approximation set~$\mathcal{Y}^t_*$ with respect to the whole Pareto front~$\mathcal{Y}^*$. Here, we investigate whether there is an implicit connection between the two concepts. Particularly, we study the relationship between $\myvec{r}(t)$ (as well as its bound of Eq.~\ref{eq:loss_bound}) and the Pareto-compliant additive $\epsilon$-indicator of \alg{MO-SOO}'s approximation set~$\mathcal{Y}^t_*$ with respect to the Pareto front~$\mathcal{Y}^*$ (or the unary additive $\epsilon$-indicator of $\mathcal{Y}^t_*$): \IUnaryAddInd. In essence, \IUnaryAddInd~measures the smallest amount $\epsilon$ needed to translate each element in the Pareto front $\mathcal{Y}^*$ such that it is \emph{weakly} dominated by at least one element in the approximation set $\mathcal{Y}^t_*$. This notion is put formally in the next definition.

\begin{definition}
	(Additive $\epsilon$-indicator~\cite{zitzler2003performance}) For any two approximation sets $A, B \in \Omega$, the additive $\epsilon$-indicator $I_{\epsilon+}$ is defined as:
	\begin{equation}
	I_{\epsilon+}(A,B) = \inf_{\epsilon\in \mathbb{R}}\{\forall \myvec{y}^2 \in B,\; \exists \myvec{y}^1\in A : \myvec{y}^1 \preceq_{\epsilon+} \myvec{y}^2\}\,\label{eq:def_binary_epsilon}
	\end{equation}
	where $\myvec{y}^1 \preceq_{\epsilon+} \myvec{y}^2\iff y^1_j \leq \epsilon + y^2_j$ for all $j\in\{1,\ldots, m\}$. If $B$ is the Pareto front $\mathcal{Y}^*$ (or a good---in terms of diversity and closeness to the Pareto front---approximation reference set $R\in\Omega$ if $\mathcal{Y}^*$ is unknown) then $I_{\epsilon+}(A,B)$ is referred to as the unary additive  epsilon indicator and is denoted by $I^{1}_{\epsilon+}(A)$, \ie, $I^{1}_{\epsilon+}(A)\myeq I_{\epsilon+}(A,\mathcal{Y}^*)$.
	\label{def:epsilon_indicator}
\end{definition}

A negative value of $I_{\epsilon+}(A,B)$ indicates that $A$ strictly dominates $B$: every element in $B$ is strictly dominated by at least one element in $A$. Note that $I^{1}_{\epsilon+}(\mathcal{Y}^t_*)\myeq I_{\epsilon+}(\mathcal{Y}^t_*,\mathcal{Y}^*)\geq0$ as no element in $\mathcal{Y}^t_*$ strictly dominates any element in $\mathcal{Y}^*$. Thus, the closer \IUnaryAddInd~to $0$, the better the quality of $\mathcal{Y}^t_*$. Figure~\ref{fig:loss_vs_indicator} illustrates the two quantities, viz.~$\myvec{r}(t)$ and \IUnaryAddInd, and highlights their explicit relationship for $m=2$. From this observation, the following lemma is deduced.

\begin{figure}[htb]
	\begin{center}
		\renewcommand{\arraystretch}{1}
		\resizebox{0.8\textwidth}{!}{
			\begin{tabular}{@{}c@{}c@{}}
				\textsc{(i) Non-conflicting Objectives} & \textsc{(ii) Conflicting Objectives} \\
				\includegraphics[width=7cm, trim = 10mm 0mm 4mm 4mm, clip]{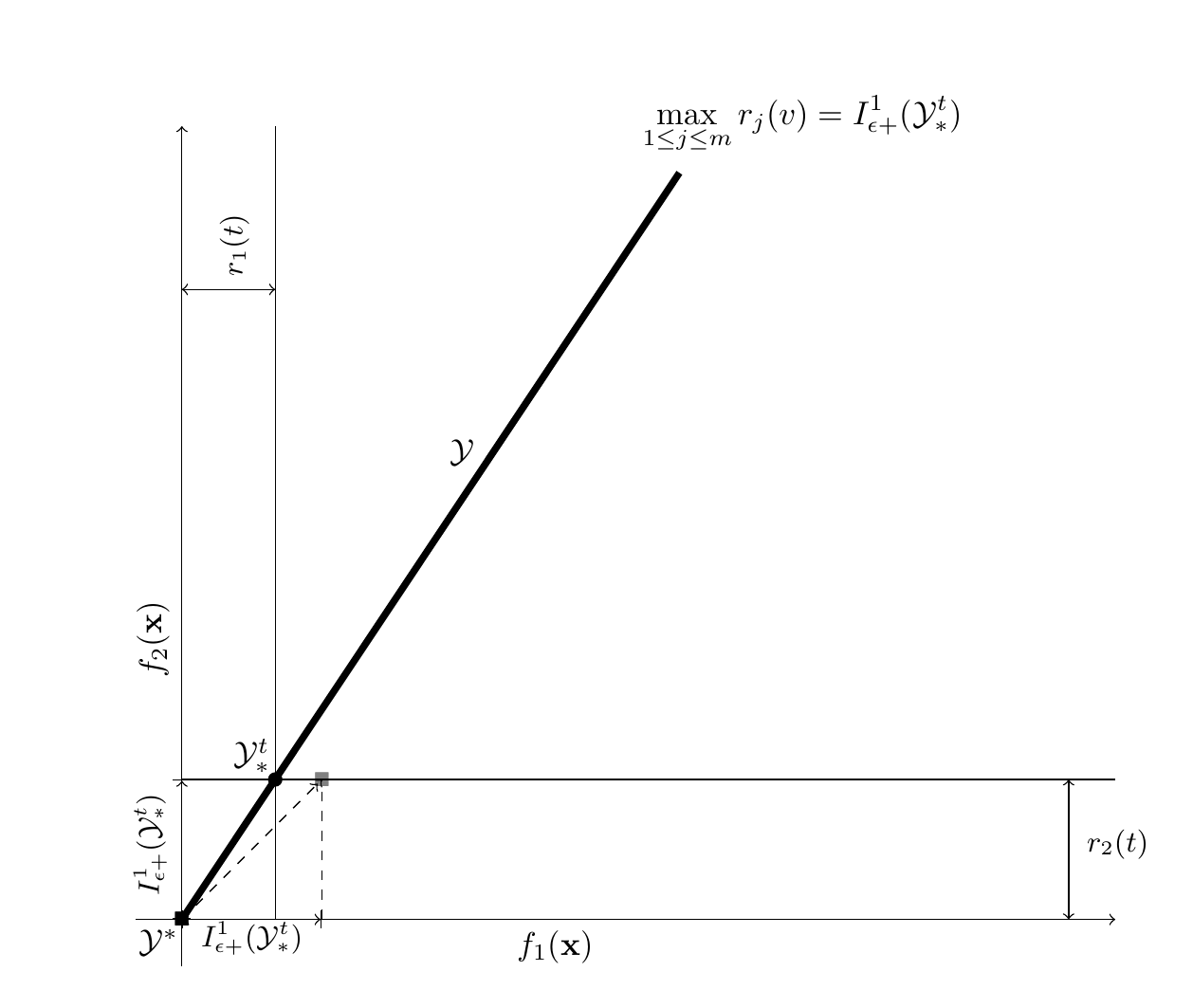} &
				\includegraphics[width=7cm, trim = 10mm 0mm 4mm 4mm, clip]{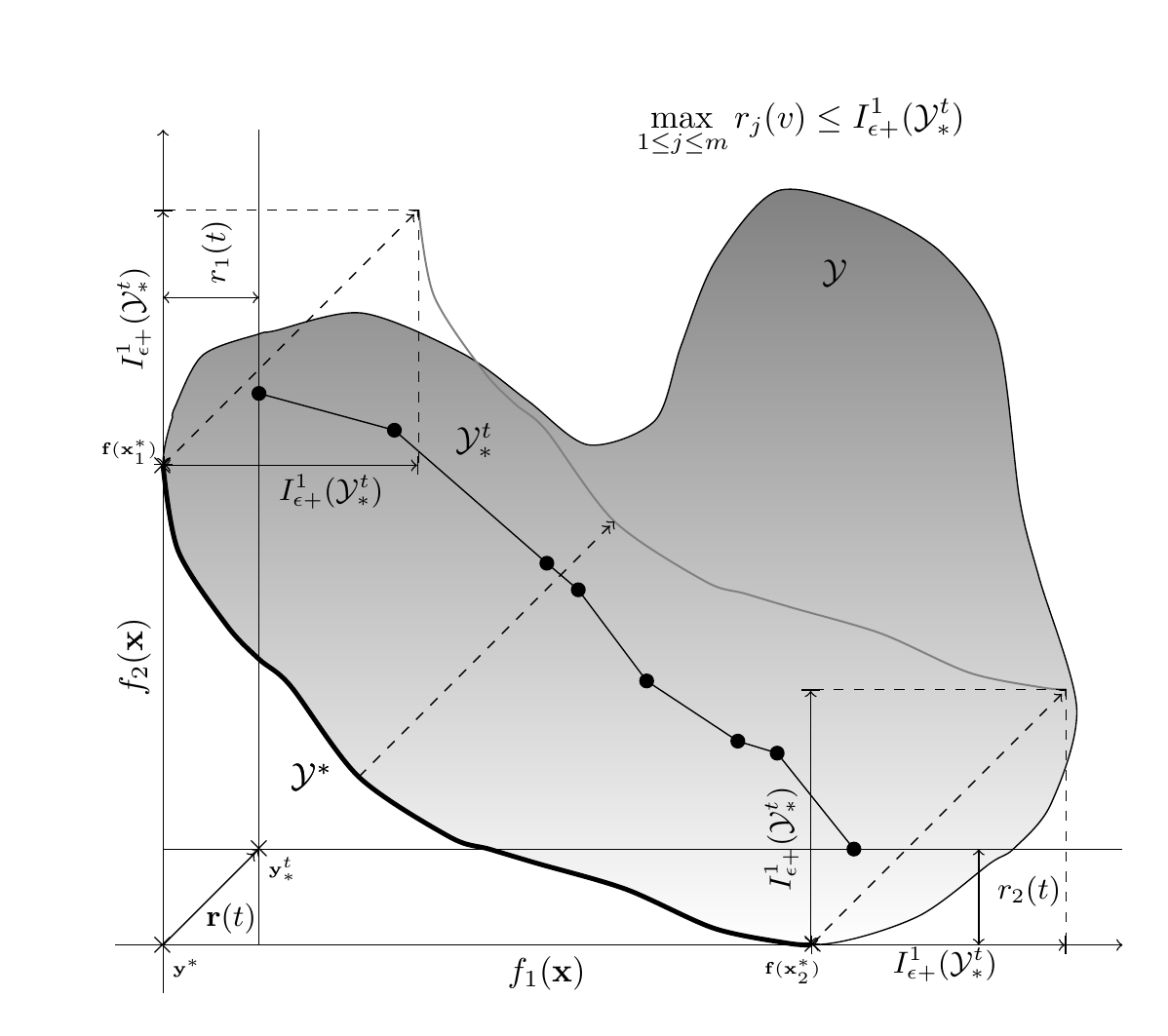} \\
		\end{tabular}}
	\end{center}
	\caption{Illustration of the vectorial loss $\mathbf{r}(t)$ of Eq.~\eqref{eq:loss} and its relation to the unary additive epsilon indicator $I^{1}_{\epsilon+}(\mathcal{Y}^t_*)$~(Definition~\ref{def:epsilon_indicator}) for a multi-objective problem ($m=2$) whose objective space~$\mathcal{Y}$ is shown in two scenarios: (i)~non-conflicting objectives and (ii)~conflicting objectives. The gray square (resp., curve) in the first (resp., second) scenario represents the least-translated Pareto front so as every translated element is  weakly dominated by at least one element in the approximation set $\mathcal{Y}^t_*$,  \ie, $\{(y_1+I^{1}_{\epsilon+}(\mathcal{Y}^t_*),\ldots,y_m+I^{1}_{\epsilon+}(\mathcal{Y}^t_*))\}_{\myvec{y}\in\mathcal{Y}^*}$. Mathematically, $I^{1}_{\epsilon+}(\mathcal{Y}^t_*)\geq\max_{1\leq j\leq m}{r}_{j}(t)$ where equality sufficiently holds when $|\mathcal{Y}^t_*|=1$~(see Lemma~\ref{lm:lower_bounding_epsilon}).}
	\label{fig:loss_vs_indicator}		
\end{figure}

\begin{figure}[htb]
	\begin{center}
		\renewcommand{\arraystretch}{1.2}
		\resizebox{0.65\textwidth}{!}{
			\includegraphics[scale=0.65,trim=5 10 2 10 mm, clip=true]{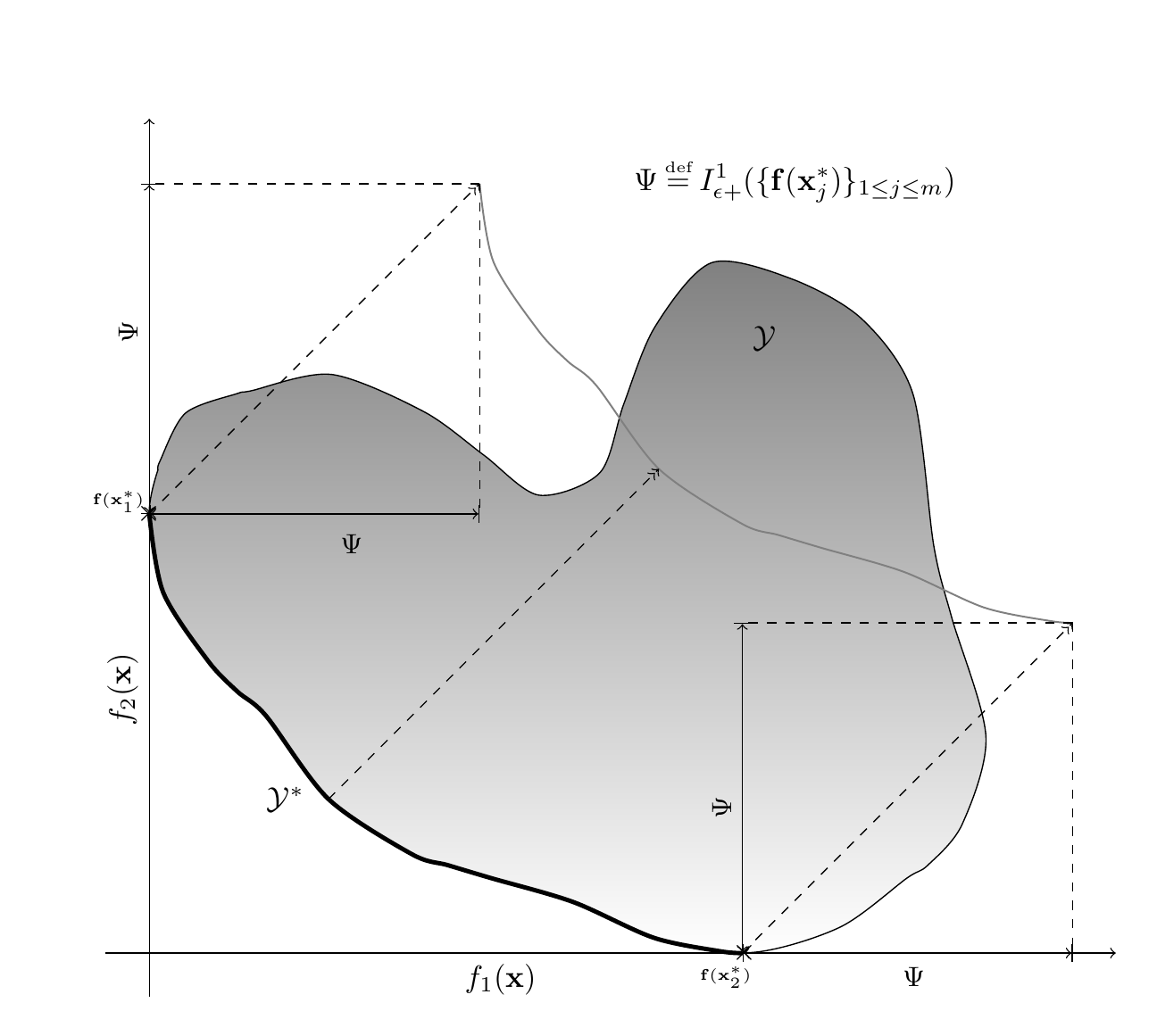}}
	\end{center}
	\caption{The objective space~$\mathcal{Y}$ for a multi-objective optimization problem ($m=2$) with conflict dimension~$\Psi\geq 0$ (Definition~\ref{def:conflict_dimension}). The solid curve marks the Pareto front $\mathcal{Y}^*$. The faded curve represents the $\Psi$-translated Pareto front, \ie, $\{(y_1+\Psi,\ldots,y_m+\Psi)\}_{\myvec{y}\in\mathcal{Y}^*}$. Every element of the $\Psi$-translated Pareto front is  \emph{weakly} dominated by at least one element in the set $\{\mathbf{f}(\mathbf{x}^*_j)\}_{1\leq j \leq m} \subseteq \mathcal{Y}^*$.}
	\label{fig:conflict_dimension}		
\end{figure}

\begin{lem}
	For any MOO solver, we have \IUnaryAddInd$\geq \max_{1\leq j\leq m}r_j(t)$.
	\label{lm:lower_bounding_epsilon}
\end{lem}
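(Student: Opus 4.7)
The plan is to unpack the definition of $I_{\epsilon+}^1(\mathcal{Y}^t_*)$ and evaluate it at the $m$ ``extreme'' Pareto optimal vectors that realize the components of the ideal point $\mathbf{y}^*$. Denote $\epsilon^\star \myeq I_{\epsilon+}^1(\mathcal{Y}^t_*) = I_{\epsilon+}(\mathcal{Y}^t_*,\mathcal{Y}^*)$. By Definition~\ref{def:epsilon_indicator}, this infimum is attained (or at least approached) in the sense that for every $\mathbf{y}^2 \in \mathcal{Y}^*$, there exists $\mathbf{y}^1 \in \mathcal{Y}^t_*$ with $y^1_k \leq \epsilon^\star + y^2_k$ for all $k \in \{1,\ldots,m\}$.

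Next I would, for each fixed $j \in \{1,\ldots,m\}$, specialize this statement to the Pareto optimal vector $\mathbf{f}(\mathbf{x}^*_j) \in \mathcal{Y}^*$, which by definition of the ideal point satisfies $f_j(\mathbf{x}^*_j) = y^*_j$. Applied at $\mathbf{y}^2 = \mathbf{f}(\mathbf{x}^*_j)$, the indicator definition guarantees some $\mathbf{y}^1 \in \mathcal{Y}^t_*$ with $y^1_j \leq \epsilon^\star + y^*_j$ (restricting to the $j$th component; the other components are not needed). Since $\min_{\mathbf{y} \in \mathcal{Y}^t_*} y_j \leq y^1_j$, we obtain
\begin{equation}
r_j(t) \;=\; \min_{\mathbf{y} \in \mathcal{Y}^t_*} y_j \;-\; y^*_j \;\leq\; \epsilon^\star\;. \nonumber
\end{equation}

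Taking the maximum over $j$ gives $\max_{1\leq j \leq m} r_j(t) \leq \epsilon^\star = I_{\epsilon+}^1(\mathcal{Y}^t_*)$, which is the claim. Note that this argument makes no use of \alg{MO-SOO}-specific machinery, so the bound is valid for \emph{any} MOO solver that returns an approximation set $\mathcal{Y}^t_*$, as the lemma asserts. There is no real obstacle; the only subtlety is to be careful about whether the infimum in Definition~\ref{def:epsilon_indicator} is attained: if not, one argues instead for every $\epsilon > \epsilon^\star$ and then passes to the limit $\epsilon \downarrow \epsilon^\star$, which yields the same inequality since the finite minimum and the component-wise sum are continuous in $\epsilon$. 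Figure~\ref{fig:loss_vs_indicator} makes the geometric content transparent: the $\epsilon^\star$-translated Pareto front must weakly dominate at least one point of $\mathcal{Y}^t_*$ at every Pareto vector, and enforcing this at the $j$th extreme of $\mathcal{Y}^*$ precisely forces $\epsilon^\star \geq r_j(t)$.
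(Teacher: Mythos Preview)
Your proof is correct and follows essentially the same approach as the paper: both arguments specialize the definition of $I^{1}_{\epsilon+}(\mathcal{Y}^t_*)$ to the Pareto-front extrema $\{\mathbf{f}(\mathbf{x}^*_j)\}_{1\leq j\leq m}$ and read off the inequality $r_j(t)\leq I^{1}_{\epsilon+}(\mathcal{Y}^t_*)$ from the $j$th coordinate. Your write-up is in fact more explicit than the paper's (which phrases the same idea informally in terms of ``closest elements'' and ``objective-wise translation''), and your remark on handling the infimum via a limiting argument is a welcome extra bit of care.
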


\begin{proof}From the definition of the vectorial loss measure~\eqref{eq:loss}, the $m$ closest elements on the approximation set $\mathcal{Y}^t_*$ to the $m$ extrema of the Pareto front~$\mathcal{Y}^*$---\ie, $\{\myvec{f}(\myvec{x}^*_j) \}_{1\leq j\leq m}$---differ by $\{r_j(t)\}_{1\leq j \leq m}$ along the corresponding $j$th objective, respectively. Therefore, an objective-wise translation  of at least $\max_{1\leq j \leq m}r_j(t)$ is needed so as each of the translated Pareto front extrema is \emph{weakly} dominated by at least one element in the approximation set~$\mathcal{Y}^t_*$. Thus, from Definition~\ref{def:epsilon_indicator}, \IUnaryAddInd~$\geq\max_{1\leq j \leq m}r_{j}(t)$.\end{proof}

While Lemma~\ref{lm:lower_bounding_epsilon} provides a lower bound on the indicator~\IUnaryAddInd, one is more interested in an upper bound so as to capture the convergence of the approximation set to the whole Pareto front. To this end, we propose a measure of conflict of the Pareto front extrema with respect to the rest of its elements, called \emph{conflict dimension}.

\begin{definition}(conflict dimension) The conflict dimension~$\Psi\geq 0$ for an MOO problem with $m$ objectives and Pareto front~$\mathcal{Y}^*$ is the unary additive epsilon indicator of the approximation set that consists of the extrema of $\mathcal{Y}^*$ ($m$ or less elements). Mathematically:
	\begin{equation}
	\Psi \myeq I^{1}_{\epsilon+}(\{\myvec{f}(\myvec{x}^*_j)\}_{1\leq j \leq m}) \myeq I_{\epsilon+}(\{\myvec{f}(\myvec{x}^*_j)\}_{1\leq j \leq m}, \mathcal{Y}^*)\label{eq:conflict_dimensionality}
	\end{equation}
	\label{def:conflict_dimension}
\end{definition}


Figure~\ref{fig:conflict_dimension} illustrates the proposed measure. Note that $\Psi$ is an intrinsic property of the MOO problem's Pareto front~$\mathcal{Y}^*$. In essence, the conflict dimension~$\Psi$ captures the proximity of Pareto front extrema to the rest of its elements, where~$\Psi=0 \iff |\mathcal{Y}^*|\leq m$. We now provide our upper bound on the indicator~\IUnaryAddInd.

\begin{thm}[\IUnaryAddInd~for \alg{MO-SOO}] Let us define $\acute{h}(t)\myeq\min(h(t),h_{max}(t)+1)$ where $t$ is the number of iterations and $h(t)$---as in Theorem~\ref{thm:mosoo}---is the smallest $h\geq0$ such that Eq.~\eqref{eq:thm_mosoo} holds. Then for an MOO problem with conflict dimension~$\Psi$, the indicator~\IUnaryAddInd~of \alg{MO-SOO} is bounded as:
	\begin{equation}
	I^{1}_{\epsilon+}(\mathcal{Y}^t_*) < {\Psi+\max_{1\leq k,l \leq m} (1 + 2C_k{\delta_k(\acute{h}(t))}^{-d_{s_k}})\cdot\delta_l(\acute{h}(t))}\;.\label{eq:indicator_bound}
	\end{equation}
	\label{thm:indicator_mosoo}
\end{thm}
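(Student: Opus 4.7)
The plan is to translate Definition~\ref{def:epsilon_indicator} into a concrete construction: for an arbitrary Pareto point $\myvec{y}^2\in\mathcal{Y}^*$, exhibit a single element $\myvec{y}^1\in\mathcal{Y}^t_*$ that weakly $\epsilon$-dominates $\myvec{y}^2$ with $\epsilon$ bounded by the right-hand side of~\eqref{eq:indicator_bound}, and then conclude by taking the infimum in the definition of $I^{1}_{\epsilon+}$. Conceptually, the admissible translation $\epsilon$ will split into two pieces: a piece $\Psi$ that absorbs the intrinsic conflict among Pareto points, and a piece depending on \alg{MO-SOO}'s finite-time performance that measures how well the algorithm has resolved the objective-wise extrema.

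For the first piece I would invoke the conflict dimension (Definition~\ref{def:conflict_dimension}): every $\myvec{y}^2\in\mathcal{Y}^*$ is weakly $\Psi$-dominated by at least one of the extrema $\myvec{f}(\myvec{x}^*_k)$, so there is some $k\in\{1,\dots,m\}$ with $f_l(\myvec{x}^*_k)\leq \Psi+y^{2}_l$ for every $l$. This reduces the task to finding an element of $\mathcal{Y}^t_*$ close to $\myvec{f}(\myvec{x}^*_k)$ \emph{in every coordinate}. For the $k$-th coordinate I would appeal directly to Theorem~\ref{thm:mosoo}: choose $\myvec{y}^1\in\mathcal{Y}^t_*$ to be an element realizing the empirical $k$-th minimum, so that $y^{1}_k\leq f_k(\myvec{x}^*_k)+\delta_k(\acute{h}(t))$, and the representative state underlying $\myvec{y}^1$ lies in $\mathcal{X}_k^{\delta_k(\acute{h}(t))}$.

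The delicate step is bounding the remaining coordinates $y^{1}_l$ for $l\neq k$, because the empirical $k$-minimizer is only guaranteed to lie in $\mathcal{X}_k^{\delta_k(\acute{h}(t))}$ rather than in $\myvec{x}^*_k$'s own cell. Here I would reuse the near-optimality-dimension argument already sketched in the caption of Figure~\ref{fig:conv_preliminary_1}: at depth $\acute{h}(t)$ the region $\mathcal{X}_k^{\delta_k(\acute{h}(t))}$ contains at most $C_k\,\delta_k(\acute{h}(t))^{-d_{s_k}}$ cells by~\eqref{eq:optimal_nodes_bound}, and each such cell has $f_l$-diameter at most $2\delta_l(\acute{h}(t))$ by combining Assumptions~\ref{asmp:smoothness} and~\ref{asmp:bounded_cell}. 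A chain-of-cells traversal of $\mathcal{X}_k^{\delta_k(\acute{h}(t))}$ from $\myvec{x}^*_k$'s cell to $\myvec{y}^1$'s cell then yields the key estimate
\[
y^{1}_l-f_l(\myvec{x}^*_k)\;\leq\;2\,C_k\,\delta_k(\acute{h}(t))^{-d_{s_k}}\,\delta_l(\acute{h}(t)).
\]

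Concatenating the conflict-dimension bound with this near-optimality bound gives $y^{1}_l\leq\Psi+y^{2}_l+(1+2C_k\delta_k(\acute{h}(t))^{-d_{s_k}})\delta_l(\acute{h}(t))$ uniformly in $l$ (the case $l=k$ contributes only $\delta_k(\acute{h}(t))$, which is dominated by the same expression). Taking the worst case over the $\myvec{y}^2$-dependent index $k$ and over the coordinate $l$ then delivers~\eqref{eq:indicator_bound}, with strictness in the inequality coming from the slack in these bounds. I expect the main obstacle to be the chain-of-cells step: one has to argue geometrically that moving inside $\mathcal{X}_k^{\delta_k(\acute{h}(t))}$ between cells accumulates an $f_l$-cost at most (number of cells)$\times$(per-cell $f_l$-diameter) in order to recover exactly the factor $2C_k\delta_k(\acute{h}(t))^{-d_{s_k}}\delta_l(\acute{h}(t))$ appearing in the statement.
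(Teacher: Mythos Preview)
Your proposal is correct and follows essentially the same route as the paper: both decompose the admissible translation into the conflict dimension $\Psi$ plus an objective-wise resolution term, and both obtain that term via the cell-count\,$\times$\,cell-diameter estimate sketched in the caption of Figure~\ref{fig:conv_preliminary_1} (your ``chain-of-cells'' step), which is exactly Eq.~\eqref{eq:worst_extreme_translation} in the paper's proof. The only cosmetic difference is ordering: the paper first passes through an intermediate worst-case approximation set $\mathcal{Y}^{t,w}_*$ and then invokes $\Psi$, whereas you invoke $\Psi$ first and then pick the empirical $k$-minimizer directly---the underlying inequalities are identical.
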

\noindent
\begin{proof}  From the loss bound~\eqref{eq:loss_bound} established in Theorem~\ref{thm:mosoo}, \alg{MO-SOO}'s approximation set after $t$ iterations~$\mathcal{Y}^t_*$ lies in a portion of the function space, namely $\{\myvec{f}(\mathcal{X}_1^{\delta_1(\acute{h}(t))}),\ldots,\myvec{f}(\mathcal{X}_m^{\delta_m(\acute{h}(t))})\}$ and possibly $\myvec{f}(\mathcal{X}^{\acute{h}(t)}_{ND})$ (defined before Lemma~\ref{lm:mosoo} in Section~\ref{sec:bounding_loss} and depicted in Figure~\ref{fig:conv_preliminary_1}, for $m=2$). Therefore, in the worst-case scenario, $\mathcal{Y}^t_*$ consists of $m$ (or less) elements that contribute to the nadir point of $\myvec{f}(\mathcal{X}^{\acute{h}(t)}_{ND})$ (\eg, $\mathcal{Y}^t_*=\{\myvec{y}^1,\myvec{y}^2\}$ in Figure~\ref{fig:conv_preliminary_1}, where their objective-wise values constitute $\myvec{y}^3$). For brevity, let us denote this worst-case approximation set and the set of the Pareto front extrema $\{\myvec{f}(\myvec{x}^*_j)\}_{1\leq j \leq m}$ by $\mathcal{Y}^{t,w}_*$ and $\mathcal{Y}^{*,e}$, respectively.

Now, for all $j\in\{1,\ldots,m\}$, the maximum objective-wise translation between the element $\mathcal{Y}^{t,w}_*\cap \myvec{f}(\mathcal{X}_j^{\delta_j(\acute{h}(t))})$ (\eg, $\myvec{y}^1$ in Figure~\ref{fig:conv_preliminary_1} for $j=1$) and $\myvec{f}(\myvec{x}^*_j)\in\mathcal{Y}^{*,e}$ is upper bounded as follows (see Figure~\ref{fig:conv_preliminary_1} for illustration).
\begin{eqnarray}
\max_{\substack{1\leq \bar{\jmath}\leq m\\\myvec{y}^1\in \mathcal{Y}^{t,w}_* \cap \myvec{f}(\mathcal{X}_j^{\delta_j(\acute{h}(t))})}}
y^1_{\bar{\jmath}} - f_{\bar{\jmath}}(\myvec{x}^*_j)&\leq& \max\Big(\max_{\substack{1\leq l\leq m\\ l\neq j}}\overbrace{ C_j{\delta_j(\acute{h}(t))}^{-d_{s_j}}}^\text{from Eq.~\eqref{eq:optimal_nodes_bound}}\cdot \underbrace{2\delta_l(\acute{h}(t))}_\text{from~\ref{asmp:smoothness} and~\ref{asmp:bounded_cell}}, \overbrace{\delta_j(\acute{h}(t))}^\text{from Eq.~\eqref{eq:loss_bound}}\Big) \nonumber\\
&\leq&\max_{1\leq k\leq m} \Bigg( \max\Big(\max_{\substack{1\leq l\leq m\\ l\neq k}} 2C_k{\delta_k(\acute{h}(t))}^{-d_{s_k}}\delta_l(\acute{h}(t)), \delta_k(\acute{h}(t)) \Big)\Bigg)\nonumber\\
&<&  \max_{1\leq k,l \leq m} (1 + 2C_k{\delta_k(\acute{h}(t))}^{-d_{s_k}})\cdot\delta_l(\acute{h}(t))\;.\label{eq:worst_extreme_translation}
\end{eqnarray}
Put it differently, elements in $\mathcal{Y}^{t,w}_*$ differ objective-wise by a value less than the right-hand side of~\eqref{eq:worst_extreme_translation} with respect to their corresponding closest elements in $\mathcal{Y}^e_*$.
On the other hand, Definition~\ref{def:conflict_dimension} implies that there exists at least one element $\myvec{y}^2\in\mathcal{Y}^{*,e}$ for every element $\myvec{y}^3\in \mathcal{Y}^*$ such that $\myvec{y}^2 \preceq_{\Psi}\myvec{y}^3$. Consequently, we have
\begin{multline}
{y}^2_j+ \max_{1\leq k,l \leq m} (1 + 2C_k{\delta_k(\acute{h}(t))}^{-d_{s_k}})\cdot\delta_l(\acute{h}(t))\leq\\ {\Psi+\max_{1\leq k,l \leq m} (1 + 2C_k{\delta_k(\acute{h}(t))}^{-d_{s_k}})\cdot\delta_l(\acute{h}(t))} + {y}^3_j\label{eq:extreme_translation}\;,
\end{multline}
for all $j\in\{1,\ldots,m\}$. Combining~\eqref{eq:worst_extreme_translation} and~\eqref{eq:extreme_translation} indicates that for every element $\myvec{y}^3\in\mathcal{Y}^*$, there exists at least one element $\myvec{y}^1\in\mathcal{Y}^{t,w}_*$ such that
\begin{equation}
{y}^1_j< {\Psi+\max_{1\leq k,l \leq m} (1 + 2C_k{\delta_k(\acute{h}(t))}^{-d_{s_k}})\cdot\delta_l(\acute{h}(t))} + {y}^3_j\label{eq:pareto_worst_translation}\;,
\end{equation}
for all $j\in\{1,\ldots,m\}$. In other words, $$I^{1}_{\epsilon+}(\mathcal{Y}^{t,w}_*)\leq {\Psi+\max_{1\leq k,l \leq m} (1 + 2C_k{\delta_k(\acute{h}(t))}^{-d_{s_k}})\delta_l(\acute{h}(t))}$$. Since $\max_{\myvec{y}^4\in\mathcal{Y}^t_*} y^4_j \leq\max_{\myvec{y}^1\in\mathcal{Y}^{t,w}_*} y^1_j$ for all $j\in\{1,\ldots,m\}$, we have
\begin{equation}
I^{1}_{\epsilon+}(\mathcal{Y}^{t}_*)< {\Psi+\max_{1\leq k,l \leq m} (1 + 2C_k{\delta_k(\acute{h}(t))}^{-d_{s_k}})\cdot\delta_l(\acute{h}(t))}\nonumber\;.
\end{equation}
\end{proof}

Theorem~\ref{thm:indicator_mosoo} describes the bound on~\IUnaryAddInd~by a non-increasing function, viz. $\max_{1\leq k,l \leq m}(1 + 2C_k{\delta_k(\acute{h}(t))}^{-d_{s_k}})\cdot\delta_l(\acute{h}(t))$ reflecting the objectives smoothness with an offset dependent on the structure of the Pareto front~with regard to its extrema~$\{\myvec{f}(\myvec{x}^*_j)\}_{1\leq j \leq m}$---\ie, the conflict dimensionality~$\Psi$.
Section~\ref{sec:illust} gives some illustrative examples about the characteristics of the non-increasing function in relation to the theoretical bounds presented. In Section~\ref{sec:validating_bounds}, these theoretical bounds are calculated via \emph{symbolic computation} and validated on a set of synthetic problems. It should be noted that as the rightmost term---of the Eq.~\ref{eq:thm_mosoo}---diminishes to zero, the presented upper-bound holds down to $\Psi$ and fails to characterize/follow \IUnaryAddInd~afterwards. This does not imply that \IUnaryAddInd~will not decrease henceforth but rather does not guarantee the same. Nevertheless, the result of the next section indicates that in the limit \IUnaryAddInd~decreases to zero, since \alg{MO-SOO}'s approximation set converges asymptotically to the Pareto front, as supported by the empirical validation of Section~\ref{sec:validating_bounds}.

\subsection{Asymptotic Performance}
\label{sec:asymptotic}
Theorem~\ref{thm:mosoo} addressed the finite-time performance of \alg{MO-SOO} with respect to $m$ points on the Pareto front, whereas Theorem~\ref{thm:indicator_mosoo} established it with respect to the additive $\epsilon$-indicator as the number of iterations~$t$ grows. Here, we consider the asymptotic behavior of \alg{MO-SOO}, that is, its approximation set given an infinite budget of function evaluations.  Asymptotic analysis has been the core of convergence studies of several established algorithms (see, e.g., ~\cite{Torn1989,Conn1997,Sergeyev1998,Kelley1999,Huyer1999,Lewis2002,Finkel2004,Audet2006,Conn2009,Sergeyev2016}).
In this section, we show that \alg{MO-SOO} asymptotically converges to the whole Pareto front.

\alg{MO-SOO} guarantees that no portion of $\mathcal{X}$ is disregarded $\iff$ $h_{max}(t)\to \infty$ as $t\to\infty$. Accordingly, if a Pareto optimal node happens to be a leaf node at iteration $\acute{t}$, then it will definitely get expanded in one of the next iterations $\geq \acute{t}+1$. As the number of iterations $t$ grows bigger, the base points sampled by \alg{MO-SOO} form a dense subset of $\mathcal{X}$ such that for an arbitrary small $\epsilon\geq 0$: 
$ \forall \acute{\myvec{x}}\in \mathcal{X}^*, \exists\text{ a base point }\myvec{x} \text{ such that } |\myvec{f}(\myvec{x})-\myvec{f}(\acute{\myvec{x}})|\leq \epsilon$. The next theorem establishes formally our proposition about the consistency property of \alg{MO-SOO}.

\begin{thm}[\alg{MO-SOO} Consistency]
	\alg{MO-SOO} is consistent, if $h_{max}(t)\to \infty$ as $t\to\infty$, where $t$ is the number of iterations.\label{thm:asymp}
\end{thm}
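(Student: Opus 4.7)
The plan is to show that, under $h_{max}(t)\to\infty$, every Pareto optimal solution is approximated to arbitrary objective-space precision by some base point sampled by \alg{MO-SOO} in finite time; consistency then follows from Assumptions \ref{asmp:smoothness} and \ref{asmp:bounded_cell}. Fix an arbitrary $\myvec{x}^*\in\mathcal{X}^*$ and let $(h,i_h)$ denote the Pareto optimal node at depth $h$ containing $\myvec{x}^*$, i.e., $\myvec{x}^*\in\mathcal{X}_{h,i_h}$. I would first prove by induction on $h$ that $(h,i_h)$ is expanded after finitely many iterations. The base case is immediate: the root $(0,0)$ is expanded in the first iteration, creating and evaluating $(1,i_1)$.

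For the inductive step, assume $(h,i_h)$ is expanded at some iteration $\bar{t}$, so that its child $(h+1,i_{h+1})$ exists as an evaluated leaf from iteration $\bar{t}$ onward. Since $h_{max}(t)\to\infty$, there are infinitely many later iterations whose sweep reaches depth $h+1$; at each such iteration either $(h+1,i_{h+1})\in\mathcal{Q}_t$ (and the induction advances), or it is dominated within $\mathcal{P}_t\cup\mathcal{V}$ by the representative of some other evaluated node. Combining Assumptions \ref{asmp:smoothness} and \ref{asmp:bounded_cell} yields $|f_j(\myvec{x}_{h+1,i_{h+1}})-f_j(\myvec{x}^*)|\leq\delta_j(h+1)$ for every $j$, and Pareto optimality of $\myvec{f}(\myvec{x}^*)$ forces any dominating representative $\myvec{x}$ to satisfy $f_j(\myvec{x})\leq f_j(\myvec{x}^*)+\delta_j(h+1)$ for all $j$. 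Dominators therefore have representatives inside the near-optimal sets $\{\mathcal{X}_k^{\delta_k(h+1)}\}_{1\leq k\leq m}$, whose node populations are finitely bounded by Eq.~\eqref{eq:optimal_nodes_bound}. Running the same inductive argument along each dominator's ancestor chain (or, equivalently, a pigeonhole over the finitely many candidate dominating leaves at depth $\leq h+1$) shows that every persistent dominator is itself expanded in finite time; after all such expansions the leaf $(h+1,i_{h+1})$ becomes non-dominated and enters $\mathcal{Q}_t$, completing the induction.

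Given the inductive claim, for any $\epsilon>0$ I pick $h$ large enough that $\max_j\delta_j(h)<\epsilon$ (possible by the limit condition in Assumption \ref{asmp:bounded_cell}). The node $(h,i_h)$ is then evaluated in finitely many iterations, and the H\"older bound gives $|f_j(\myvec{x}_{h,i_h})-f_j(\myvec{x}^*)|\leq\delta_j(h)<\epsilon$ for every $j$, so $\myvec{f}(\myvec{x}^*)$ is approximated in each coordinate to within $\epsilon$ by an element of $\mathcal{Y}^t_*$ for all sufficiently large $t$. Since $\myvec{x}^*\in\mathcal{X}^*$ was arbitrary, \alg{MO-SOO}'s approximation set asymptotically reaches every point of the Pareto front---which is the consistency property. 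The main obstacle will be the inductive step itself: ruling out a perpetually regenerating chain of dominators---finite at any fixed depth by the near-optimality bound, but whose children one level down may, via the same A1--A2 argument, still lie in a (smaller) near-optimal band around $\myvec{f}(\myvec{x}^*)$ and thus continue to block $(h+1,i_{h+1})$. Controlling this interplay between the depth-by-depth non-dominance filter and the regularity of shrinking cells, and making the ``dominators must themselves expand'' pigeonhole fully rigorous, is where the technical work concentrates.
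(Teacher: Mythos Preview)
Your approach differs substantially from the paper's. The paper gives a three-line argument: for $\acute{\myvec{x}}\in\mathcal{X}^*$ it denotes by $(h_{\acute{\myvec{x}}}(t),i_{\acute{\myvec{x}}})$ the deepest node in $\mathcal{T}_t$ containing $\acute{\myvec{x}}$, asserts---without any of the inductive bookkeeping you attempt---that $h_{max}(t)\to\infty$ forces $h_{\acute{\myvec{x}}}(t)\to\infty$, then uses Assumption~\ref{asmp:bounded_cell} to obtain $\ell_j(\myvec{x}_{h_{\acute{\myvec{x}}}(t),i_{\acute{\myvec{x}}}},\acute{\myvec{x}})\leq\delta_j(h_{\acute{\myvec{x}}}(t))\to 0$ and the coincidence axiom of the semi-metric $\ell_j$ to conclude $\myvec{x}_{h_{\acute{\myvec{x}}}(t),i_{\acute{\myvec{x}}}}\to\acute{\myvec{x}}$ in the decision space. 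The claim that every Pareto-optimal leaf is eventually expanded appears only in the prose preceding the theorem and is not argued inside the proof, so what you are trying to supply is precisely the step the paper treats as self-evident. Note also that the paper concludes via decision-space convergence (coincidence axiom), whereas you conclude via objective-space closeness using~\ref{asmp:smoothness}; both are acceptable readings of ``consistency''.

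Your inductive step, however, contains a concrete error beyond the obstacle you already flag. The sets $\mathcal{X}_k^{\delta_k(h+1)}$ and the cardinality bound of Eq.~\eqref{eq:optimal_nodes_bound} are defined relative to the \emph{objective-wise optimizers} $\myvec{x}_k^*$, not relative to an arbitrary $\myvec{x}^*\in\mathcal{X}^*$; from $f_j(\myvec{x})\leq f_j(\myvec{x}^*)+\delta_j(h+1)$ for all $j$ you cannot place $\myvec{x}$ in $\bigcup_k\mathcal{X}_k^{\delta_k(h+1)}$, so the near-optimality-dimension machinery does not bound the number of dominators around a generic point of the front. A cleaner route that sidesteps both this misstep and your ``regenerating dominators'' worry is to argue globally rather than node-by-node: in any sweep that reaches the current minimum leaf depth $h_0$ (guaranteed eventually since $h_{max}(t)\to\infty$), one has $\mathcal{V}=\emptyset$ upon arrival at depth $h_0$, hence $\mathcal{Q}_t=\ND(\mathcal{P}_t)\neq\emptyset$ and at least one depth-$h_0$ leaf is expanded; since the leaf population at depth $h_0$ is finite and never replenished, finitely many sweeps exhaust it and the minimum leaf depth strictly increases. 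Iterating, the minimum leaf depth is unbounded, so every node at every fixed depth---in particular each $(h,i_h)$---is expanded in finite time, which is exactly the inductive claim you need.
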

\begin{proof} Let us denote the deepest Pareto optimal node that has the Pareto optimal solution $\acute{\myvec{x}}\in \mathcal{X}^*$ by $(h_{\acute{\myvec{x}}}(t),i_{\acute{\myvec{x}}})$. \ie, $\acute{\myvec{x}}\in \mathcal{X}_{h_{\acute{\myvec{x}}}(t),i_{\acute{\myvec{x}}}}$.  From Assumption~\ref{asmp:bounded_cell} and the definition of the semi-metric~$\ell_j$,
\begin{align*}
0\leq \ell_j(\myvec{x}_{h_{\acute{\myvec{x}}}(t),i_{\acute{\myvec{x}}}},\acute{\myvec{x}}) &\leq\;\; \delta_j(h_{\acute{\myvec{x}}}(t))\;\;\;, \forall \acute{\myvec{x}}\in \mathcal{X}^*,j=1,\ldots,m\;.\\
\intertext{{Since $h_{max}(t)\to \infty$ as $t\to\infty$, the depths of all the Pareto optimal nodes tends to $\infty$, mathematically:}}
0\leq \lim_{t\to\infty}\ell_j(\myvec{x}_{h_{\acute{\myvec{x}}}(t),i_{\acute{\myvec{x}}}},\acute{\myvec{x}}) &\leq \lim_{h_{\acute{\myvec{x}}}(t)\to\infty}\delta_j(h_{\acute{\myvec{x}}}(t))\;\;\;, \forall \acute{\myvec{x}}\in \mathcal{X}^*,j=1,\ldots,m\;. \\
\intertext{\text{Then, with Assumption~\ref{asmp:bounded_cell}:}}
\lim_{t\to\infty}\ell_j(\myvec{x}_{h_{\acute{\myvec{x}}}(t),i_{\acute{\myvec{x}}}},\acute{\myvec{x}})&= \;\;\;\;0\;\;\;\;\;\;\;\;\;\;\;\;\;\;\;\;\;\;\;\;\;\;\;\;, \forall \acute{\myvec{x}}\in \mathcal{X}^*,j=1,\ldots,m\;, \\
\intertext{and from the coincidence axiom satisfied by $\ell_j$ as a semi-metric:}
\lim_{t\to\infty}\myvec{x}_{h_{\acute{\myvec{x}}}(t),i_{\acute{\myvec{x}}}} &=\;\;\;\;\acute{\myvec{x}}\;\;\;\;\;\;\;\;\;\;\;\;\;\;\;\;\;\;\;\;\;\;\;\;\;, \forall \acute{\myvec{x}}\in \mathcal{X}^*\;.
\end{align*}
Thus, as the number of iterations $t$ grows bigger, \alg{MO-SOO} asymptotically converges to the Pareto front.\end{proof}



\subsection{Illustration}
\label{sec:illust}
In this section, insights on the loss bound~\eqref{eq:loss_bound} is presented and illustrated through some examples.\footnote{As the indicator bound~\eqref{eq:indicator_bound} is dependent on the loss bound~\eqref{eq:loss_bound}, similar analysis holds true for the indicator bound as well.} For $j=1,\ldots,m$; let $\delta_j(h)=c_j\gamma_j^h$ for some constants $c_j>0$ and $0<\gamma_j<1$; $h_{max}(t)=t^{p}$ for $p\in(0,1)$. Putting this in \eqref{eq:loss_bound}, two interesting cases can be noted:
\begin{itemize}
	\item Consider the case where  $\{d_{s_j}\}_{0\leq j \leq m}=0$, denote $\max_{1\leq j \leq m}	C_j$ by $\hat{C}_j$. From Theorem~\ref{thm:mosoo}:
	\begin{eqnarray}
	t\leq h_{max}(t)\sum\limits_{l=0}^{h(t)}\max_{1\leq j \leq m}	C_j\delta_j(l)^{-d_{s_j}} = 	h_{max}(t)\cdot\hat{C}_j(h(t)+1)\nonumber\;.
	\end{eqnarray}
	Thus, for $j=1,\ldots,m$: 
	\begin{equation}
	r_j(t)\leq {O}(\gamma_j^{\min(t^{1-p},\;t^p)})\;,\label{eq:exp_loss}
	\end{equation}
	\ie, the loss is a stretched-exponential function of the number of iterations $t$.
	\item Consider the case where $\exists k\in\{1,\ldots,m\}$ such that $$\forall l,\;C_k\delta_k(l)^{-d_{s_k}} = \max_{1\leq j \leq m}	C_j\delta_j(l)^{-d_{s_j}}$$ and $d_{s_k}>0$, then from Theorem~\ref{thm:mosoo}, we have:
	\begin{eqnarray}
	t\leq h_{max}(t)\sum\limits_{l=0}^{h(t)}C_k\delta_k(l)^{-d_{s_k}} &=& 	C_k\cdot c^{-d_{s_k}}_{k}\cdot t^p\cdot\frac{\gamma^{-d_{s_k}(h(t)+1)}_k-1}{\gamma^{-d_{s_k}}_k-1}\nonumber\;,\\
	\frac{(1-\gamma^{d_{s_k}}_k)}{C_k}\cdot t^{1-p} &\leq &c^{-d_{s_k}}_k\gamma^{-d_{s_k}h(t)}_k\;,\nonumber\\
	\Bigg(\frac{C_k}{1-\gamma^{d_{s_k}}}\Bigg)^{1/d_{s_k}}\cdot t^{-\frac{1-p}{d_{s_k}}} &\geq &  c_k\gamma^{h(t)}_k\;.\nonumber
	\end{eqnarray}
	Hence, $h(t)$ is of a logarithmic order in $t$, making $h(t)< h_{max}(t) + 1$ as $t$ grows bigger.
	Thus, with $\delta_j(h)=\Theta(\delta_k(h))$ for $j=1,\ldots,m$;
	\begin{equation}
	r_j(t)\leq {O}(t^{-\frac{1-p}{d_{s_k}}})\;,\label{eq:poly_loss}
	\end{equation}
	\ie, the loss is a polynomially-decreasing function of the number of iterations~$t$.
\end{itemize}
One can deduce that the performance (in terms of the loss~\eqref{eq:loss}) is influenced by two main factors, viz. the near-optimality dimension of the objectives $\{d_{s_j}\}_{0\leq j \leq m}$ , and the maximal depth function~$h_{max}(t)$.

\textit{The Maximal Depth Function $h_{max}(t)$}.
From Theorem~\ref{thm:mosoo}, the maximal depth function $h_{max}(t)$ acts as a multiplicative factor in the definition of $h(t)$ (Eq.~\ref{eq:thm_mosoo}) as well as a limiting factor on the loss bound (Eq.~\ref{eq:loss_bound}). This effect of $h_{max}(t)$ elegantly captures the exploration-vs.-exploitation trade-off. Larger $h_{max}(t)$ makes the algorithm more exploitative (deeper tree) and $h(t)$ smaller, while smaller $h_{max}(t)$ makes the algorithm more exploratory (broader tree) and $h(t)$ larger; the inverse proportionality between $h_{max}(t)$ and $h(t)$ evens out the loss bound in both situations.

\begin{figure}[tb]
	\begin{center}
		\renewcommand{\arraystretch}{1.2}
		\includegraphics[scale=0.8,trim=5 50 2 50 mm, clip=true]{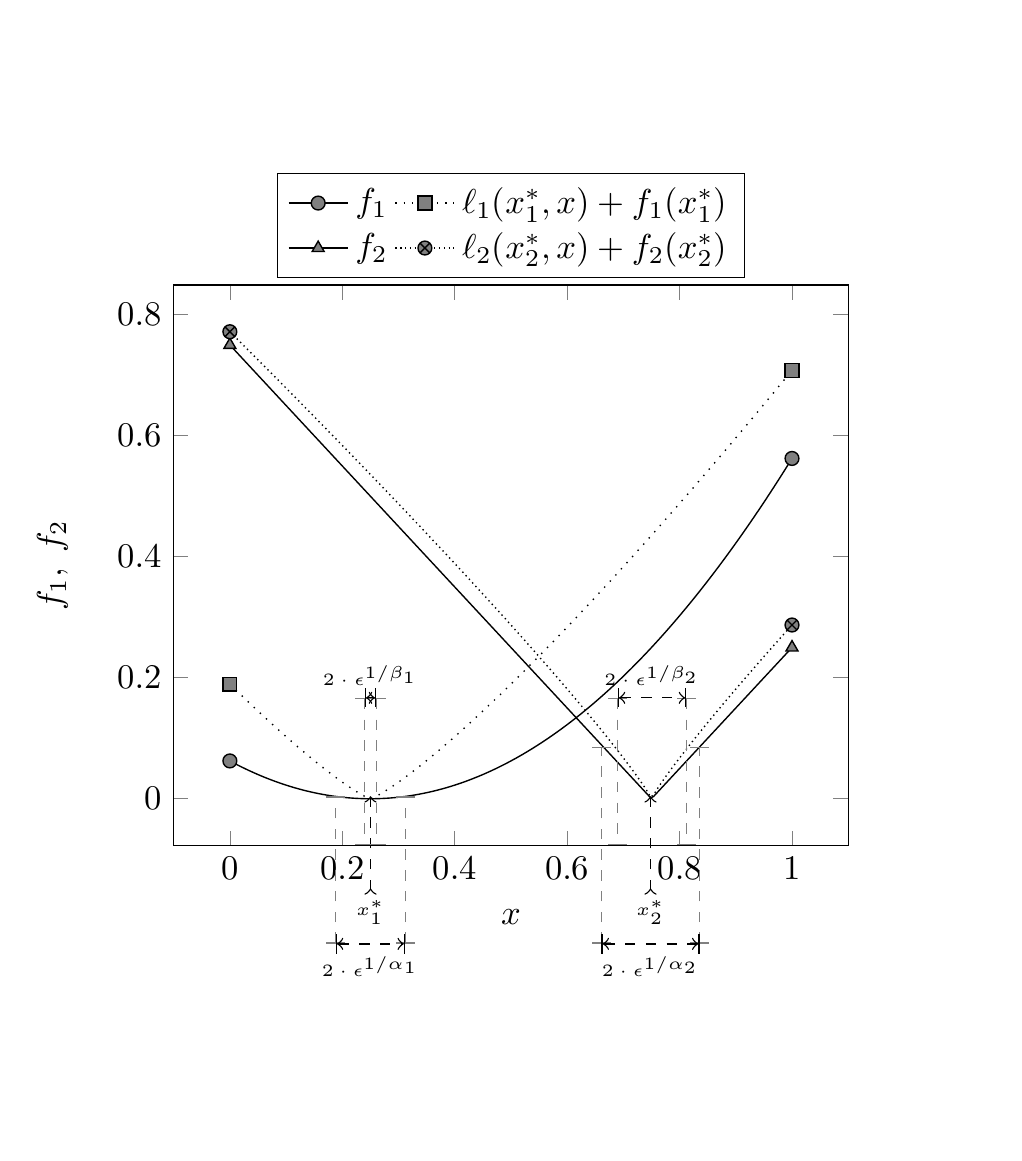}
	\end{center}
	\caption{Bi-objective problem ($m=2,n=1$) over $\mathcal{X}=[0,1]$ with $f_1(\myvec{x})=||\myvec{x}-0.25||^{\alpha_1}_\infty$, $f_2=||\myvec{x}-0.75||^{\alpha_2}_\infty$, $\ell_1(\myvec{x},\myvec{y})=||\myvec{x}-\myvec{y}||^{\beta_1}_{\infty}$, $\ell_2(\myvec{x},\myvec{y})=||\myvec{x}-\myvec{y}||^{\beta_2}_{\infty}$ where $\beta_1\leq \alpha_1$, $\beta_2\leq \alpha_2$. The region $\mathcal{X}^{\epsilon}_1$ (resp., $\mathcal{X}^{\epsilon}_2$) is the interval centered around $x^*_1$ (resp., $x^2_*$) of length $2\cdot\epsilon^{1/\alpha_1}$ (resp., $2\cdot\epsilon^{1/\alpha_2}$). They can be packed with $\epsilon^{1/\alpha_1-1/\beta_1}$ (resp., $\epsilon^{1/\alpha_2-1/\beta_2}$) intervals of length $2\cdot\epsilon^{1/\beta_1}$ (resp., $2\cdot\epsilon^{1/\beta_2}$).}
	\label{fig:bi_obj_ill}		
\end{figure}

\textit{The Near-Optimality Dimensions $\{d_{s_j}\}_{0\leq j \leq m}$}.
While $h_{max}(t)$ is a parameter of the algorithm, $\{d_{s_j}\}_{0\leq j \leq m}$ 
are dependent on the multi-objective problem at hand and are related to the algorithm's partitioning strategy through the scaling factors $\{s_j\}_{1\leq j \leq m}$. Consider the near-optimality dimensions for the bi-objective problem (depicted in Figure~\ref{fig:bi_obj_ill} for $n=1$) where $\mathcal{X}=[0,1]^n$, $f_1(\myvec{x})=||\myvec{x}-0.25||^{\alpha_1}_\infty$, and $f_2=||\myvec{x}-0.75||^{\alpha_2}_\infty$ for $\alpha_1\geq 1$, $\alpha_2\geq1$; and let \alg{MO-SOO} have a partition factor of $K=3^n$. Furthermore, assume the semi-metrics to be $\ell_1(\myvec{x},\myvec{y})=||\myvec{x}-\myvec{y}||^{\beta_1}_{\infty}$, $\ell_2(\myvec{x},\myvec{y})=||\myvec{x}-\myvec{y}||^{\beta_2}_{\infty}$ where $\beta_1\leq \alpha_1$, $\beta_2\leq \alpha_2$ in line with Assumption~\ref{asmp:smoothness}. In the light of Assumption~\ref{asmp:bounded_cell}, $\delta_1(h)$ and $\delta_2(h)$ may be written as $2^{-\beta_1}\cdot 3^{-h\beta_1}$ and $2^{-\beta_2}\cdot 3^{-h\beta_2}$, respectively; and from Assumption~\ref{asmp:shaped_cells}, we have $s_1=1$ and $s_2=1$. The region $\mathcal{X}^{\delta_1(h)}_1$ (resp., $\mathcal{X}^{\delta_2(h)}_2$) is the $L_\infty$-ball of radius $\delta_1(h)^{1/\alpha_1}$ (resp., $\delta_2(h)^{1/\alpha_2}$) centered in $\myvec{0.25}$ (resp., $\myvec{0.75}$). In line of Definition~\ref{def:near_opt_dim}, these regions can be packed by $\Big(\frac{{\delta_1(h)}^{1/\alpha_1}}{{\delta_1(h)}^{1/\beta_1}}\Big)^n$ (resp., $\Big(\frac{{\delta_2(h)}^{1/\alpha_2}}{{\delta_2(h)}^{1/\beta_2}}\Big)^n$) $L_\infty$-balls of radius $\delta_1(h)^{1/\beta_1}$ (resp., $\delta_2(h)^{1/\beta_2}$). Thus the near-optimality dimensions are $d_{s_1}=n(1/\beta_1-1/\alpha_1)$ and $d_{s_2}=n(1/\beta_2-1/\alpha_2)$. \textit{Without loss of generality}, three scenarios are present with respect to the first objective:
\begin{enumerate}
	\item $\alpha_1= \beta_1 \implies d_{s_1}=0$; the cardinality of the set $\mathcal{I}^h_1$ is a constant regardless of the depth~$h$ and the decision space dimensionality~$n$. This presents a balanced trade-off between exploration and exploitation as the semi-metric~$\ell_1$ is capturing the function~$f_1$ behavior precisely.
	\item $\alpha_1> \beta_1 \implies d_{s_1}>0\;$; the cardinality of the set $\mathcal{I}^h_1$ becomes an increasing function of the depth~$h$ and the decision space dimensionality~$n$. This presents a bias towards exploration as the semi-metric $\ell_1$ underestimates the behavior of the function $f_1$.
	\item $\alpha_1< \beta_1$; this violates Assumption~\ref{asmp:smoothness}. With this regards, the algorithm becomes more exploitative falling for local optimal solutions as the semi-metric~$\ell_1$ is overestimating $f_1$'s smoothness.
\end{enumerate}
The first two scenarios coincide with the two cases discussed earlier in this section. As $t$ grows larger and the near-optimality dimensions are zero (reflecting a balance in the exploration-vs.-exploitation dilemma), setting $p=0.5$ in $h_{max}(t)=t^{p}$ results in a faster decay of the loss bound (Eq.~\ref{eq:exp_loss}). On the other hand, when more exploration is needed, setting $p\to0$ (broader tree) gives a faster loss bound decay (Eq.~\ref{eq:poly_loss}). 


\begin{remark} It is important to reiterate here that \alg{MO-SOO} does not need the knowledge of the functions smoothness and the corresponding near-optimality dimensions, it only requires the \emph{existence} of such smoothness. These measures help only in quantifying the algorithm's performance.
\end{remark}

\begin{remark} The case of zero near-optimality dimension covers a large class of functions. In fact, it has been shown by \cite{valkostochastic} that the near-optimality dimension is zero for any function defined over a finite-dimensional and bounded space, and  whose upper- and lower-envelopes around the global optimizer are of the same order.
\end{remark}

\subsection{Empirical Validation of Theoretical Bounds}
\label{sec:validating_bounds}

In this section, the loss $\myvec{r}(t)$ and the indicator~\IUnaryAddInd~bounds of~\eqref{eq:loss_bound} and~\eqref{eq:indicator_bound}, respectively, are validated empirically for the bi-objective problem defined in Section~\ref{sec:illust} and depicted in Figure~\ref{fig:bi_obj_ill}. We compute these quantities using the \emph{Symbolic Math Toolbox} from \textsf{The MathWorks, Inc.} and compare them with respect to the numerical loss and indicator values obtained by running \alg{MO-SOO} with an evaluation budget of $v=10^4$ function evaluations.

With a partition factor of $K=3$, the decreasing sequence $\delta_1(h)$ (resp., $\delta_2(h)$) can be defined as $2^{-\alpha_1}\cdot K^{-3\alpha_1 \lfloor{h/n}\rfloor}$ (resp., $2^{-\alpha_2}\cdot K^{-3\alpha_2 \lfloor{h/n}\rfloor}$) as the search space is partitioned coordinate-wise per depth. Moreover, from Assumption~\ref{asmp:shaped_cells}, we have $s_1=1$ (resp., $s_2=1$). $C_1$ and $C_2$ of Definition~\ref{def:near_opt_dim} are set to $2$ as the cell centers may lie on the boundary of $\mathcal{X}^{\delta_1(h)}_1$ and $\mathcal{X}^{\delta_2(h)}_2$, respectively. 

To assess the effect of the conflict dimension $\Psi$ (defined in Definition~\ref{def:conflict_dimension}), eight instances of the problem are tested, where $n\in\{1,2\}$, and the $j$-optimal solutions ($x^*_1,x^*_2$) are set in one of four configurations---reflecting among others the maximum and minimum $\Psi$ values. The Pareto front $\mathcal{Y}^*$ and the conflict dimension $\Psi$ of the problem are estimated numerically from $10^6$~uniformly-sampled points. While the maximal depth function $h_{max}(t)$ acts as a very conservative multiplicative factor in~\eqref{eq:thm_mosoo} for the number of depths visited in each iteration. In our experiments, we have recorded the number of depths visited in each iteration and used the recorded values as the multiplicative factor in computing the theoretical bounds of~\eqref{eq:loss_bound} and~\eqref{eq:indicator_bound}.

The numerical and theoretical measures are presented in Figure~\ref{fig:bounds_validation}. First, one can easily verify Lemma~\ref{lm:lower_bounding_epsilon}. Second,
whilst having the same evaluation budget~$v$, the conflict and decision space dimensions have a clear impact on the corresponding number of iterations~$t$. Recall that one iteration represents executing the lines~\ref{ln:beg_iteration}--\ref{ln:end_iteration} of Algorithm~\ref{alg:mosoo}, once. Though with some offset, one can note how the theoretical measures upper bound the numerical measures with a similar trend. The code for generating the data presented in this section is available at \url{https://www.dropbox.com/s/ssiq1m52hczuj7a/mosoo-theory-validation.rar?dl=0}.

\section{Experimental Assessment}
\label{sec:assessment}

Due to space limitations,  the experimental validation of \alg{MO-SOO} and its comparison with several state-of-the-art algorithms is presented in detail in the online supplement, which is available at ~\url{https://www.dropbox.com/s/lifnnz0ajzjxdks/mosoo-supplement-quantiles.pdf?dl=0}.

\section{Conclusion}
\label{sec:conclusion}

This paper presents the Multi-Objective Simultaneous Optimistic Optimization (\alg{MO-SOO}): an optimistic approach to solve multi-objective optimization problems given a finite number of function evaluations. Using a tree of bandits, \alg{MO-SOO} hierarchically partitions the feasible decision space in search for Pareto optimal solutions using the non-dominated Pareto relation among its tree nodes. \alg{MO-SOO} performance  in terms of finite-time rate as well as asymptotic convergence has been studied, based on three basic assumptions about the function smoothness and hierarchical partitioning.
While existing theoretical analysis of MOO solvers either considers finite-set/discrete problems, provides probabilistic guarantees, or asymptotic local stationarity convergence, 
the theoretical analysis of \alg{MO-SOO} establishes a deterministic upper bound on the Pareto-compliant  $\epsilon$-indicator for continuous MOO problems that holds down to a problem-dependent measure, namely the conflict dimension, which captures the structure of the problem's Pareto front with respect to its extrema. Furthermore, it has been shown that \alg{MO-SOO} converges asymptotically to the Pareto front.

The empirical performance of \alg{MO-SOO} in approximating Pareto fronts has been evaluated using 300 benchmark MOO problems and their results are compared with three state-of-the art MOO solvers, namely  \alg{MOEA/D}, \alg{MO-CMA-ES}, and \alg{SMS-EMOA}.  The performance of \alg{MO-SOO} is comparable with best results of the top performing \alg{SMS-EMOA} algorithm. From results, we observe that problems with weakly-structured multi-modal objectives impose a challenge for \alg{MO-SOO}. This can be attributed to two factors: theoretical foundation of the algorithm (the near-optimality dimension) in scaling the exploration proportionally with the number of objective-wise global optima and the fact that sequential partitioning scheme may not adapt well in case of weakly-structured objectives. In addition, the nature of the used $\ND(\cdot)$ operator overlooks the diversity of the selected nodes for expansion.

\section*{Acknowledgement}
The authors wish to thank the ATMRI:2014-R8, Singapore, for providing financial support  to conduct this study. Thanks extended to 
Dimo Brockhoff and  Thanh-Do Tran, INRIA, for the fruitful discussion about MOBBOB via e-mails. 
\bibliography{mop}
\bibliographystyle{siam}

	\begin{landscape}
		\begin{figure}[htbp]
			\begin{center}
				\renewcommand{\arraystretch}{1.1}
				\resizebox{1.4\textwidth}{!}{
					\includegraphics[height=100cm]{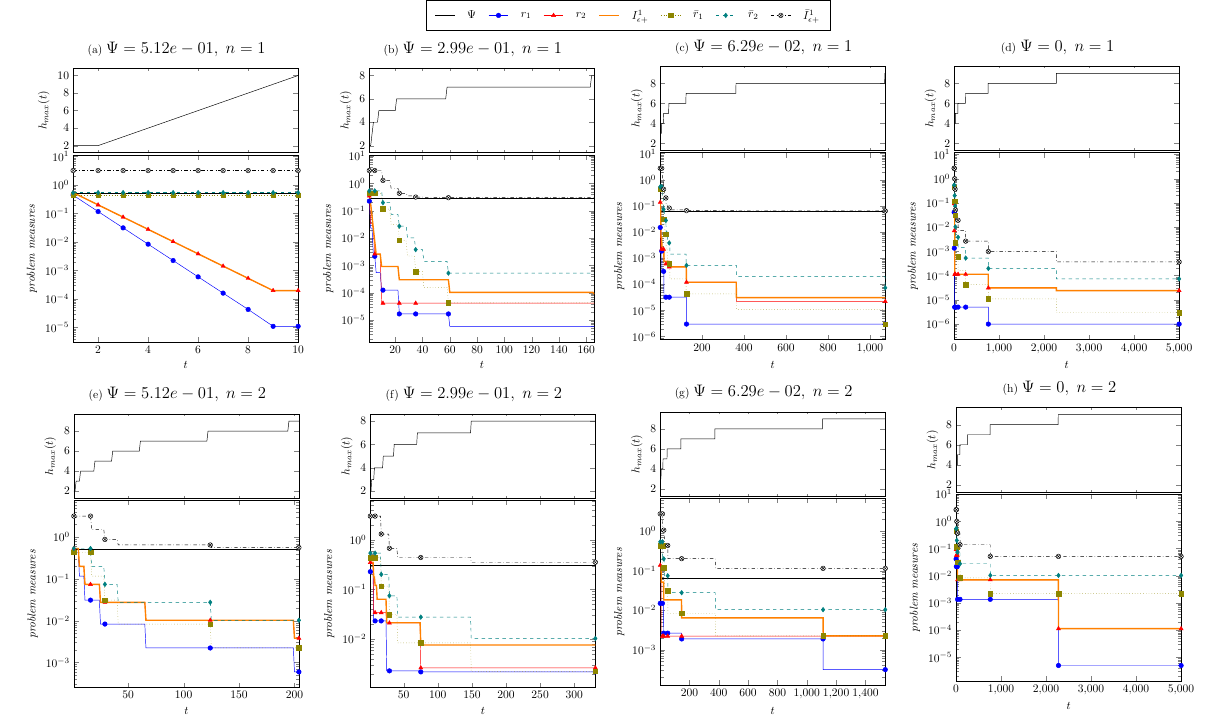}}
			\end{center}
			\caption{Empirical validation of \alg{MO-SOO}'s finite-time analysis for eight instances $\{a,\ldots,h\}$ of the bi-objective problem of Figure~\ref{fig:bi_obj_ill}. Each plot shows the problem measures, namely the loss measures~$r_1(t),\;r_2(t)$ and the indicator~\IUnaryAddInd, as well as their upper bounds (denoted by $\bar{r}_1,\;\bar{r}_2,$ and $\bar{I}^{1}_{\epsilon+}$, respectively) as a function of the number of iterations~$t$ with a computational budget of $v={10}^4$ function evaluations. The upper bounds are obtained via symbolic computation of the~\eqref{eq:loss_bound} and~\eqref{eq:indicator_bound} equations using \textsf{MATLAB}'s Symbolic Math Toolbox. The header of each instance's plot reports the decision space dimension~$n$ and the conflict dimension~$\Psi$. The $j$-optimal solutions $(x^*_1,\;x^*_2)$ are fixed as follows:  $(\myvec{0},\myvec{1})$ for $(a) \text{ and } (e)$, $(\myvec{0.21},\myvec{0.81})$ for $(b) \text{ and } (f)$, $(\myvec{0.47},\myvec{0.61})$ for $(c) \text{ and } (g)$, and $(\myvec{0.57},\myvec{0.57})$ for $(d) \text{ and } (h)$.}
			\label{fig:bounds_validation}		
		\end{figure}
	\end{landscape}

\end{document}